\documentclass[a4paper]{amsart}
\usepackage{common}

\title[Forward-in-time global exponential integrability]{Domains with globally exponentially integrable parabolic forward-in-time BMO}
 
\author{Kim Myyryl\"ainen}
\address{Department of Mathematics and Statistics, University of Jyv\"askyl\"a, PO Box 35, FI-40014 Jyv\"askyl\"a, Finland}
\email{kim.k.myyrylainen@jyu.fi}

\author{Tuomas Oikari}
\email{tuomas.oikari@gmail.com}

\author{Olli Saari}
\address{Olli Saari, Departament de Matem\`atiques,
	Universitat Polit\`ecnica de Catalunya,
	Avinguda Diagonal 647, 08028 Barcelona,
	Catalunya, Spain and Institute of Mathematics, Universitat Polit\`ecnica de Catalunya, Pau Gargallo 14, 08028 Barcelona, Catalunya, Spain}

\address{Centre de Recerca Matem\`atica, Edifici C, Campus Bellaterra, 08193 Bellaterra, Catalunya, Spain}

\date{\today}

\begin{document}

\begin{abstract}
We characterize those open sets of the space time in which parabolic forward-in-time BMO functions are in a certain forward-in-time exponential integrability class.
The characterization holds under qualitative connectivity assumptions on the domain and is formulated in terms of a quantitative growth bound on a 
forward-in-time version of the classical quasihyperbolic distance.
\end{abstract}

\maketitle

\section{Introduction} 

 \subsection*{Motivation and main result}
 The well-known John--Nirenberg inequality \cite{JohnNirenberg61} asserts that functions of bounded mean oscillation (BMO) are locally exponentially integrable.
 In an open and connected set $O \subset \R^{n}$,
 even more can be said.
 Given the seminorms  
 \[
 \no{f}_{\BMO_{\theta}(O)} := \sup_{\substack{x \in O,r > 0  \\ \dist(x,O^{c}) \ge \theta r }} \ \inf_{c \in \R} \frac{1}{r^{n}} \int_{B(x,r)} |f(y) - c| \, dy ,
 \]  
 a theorem of Reimann and Rychener \cite{MR511997} shows that $\no{f}_{\BMO_{1}(O)} \le C_{n,\theta} \no{f}_{\BMO_{\theta}(O)}$ 
 and it is a result by Smith and Stegenga \cite{SmiSte91b} that finite $\BMO_1(O)$-norm implies global exponential integrability if and only if $O$ satisfies the quasihyperbolic boundary condition.
 The domains satisfying the quasihyperbolic boundary condition are also known as H\"older domains because their planar version can be described in terms of the Riemann mapping having a H\"older continuous extension \cite{BecPom82}, see also \cite{MR802481} for a higher dimensional version.
 We refer to \cites{Buck99, Sta06} for analogous and related results in metric spaces and in spaces of homogeneous type.
 
 We study functions in parabolic (forward-in-time) BMO classes.
 These classes
 are strictly larger than the corresponding parabolic time-symmetric BMO spaces 
 defined using the structure of a space of homogeneous type coming from the standard Lebesgue measure of the space time and the parabolic metric 
 defined below.
For many purposes, especially
in the study of linear uniformly parabolic or doubly nonlinear $p$-parabolic partial differential equations,
 the forward-in-time setup is more convenient than the time-symmetric one.
For example, we mention that the parabolic forward-in-time BMO condition is satisfied by supersolutions to the heat equation,
 something that is not true with the time-symmetric parabolic BMO.
 This makes the forward-in-time class the right tool in the parabolic Moser iteration (see \cites{Moser64,Moser67} and \cite{Tru67}) 
 and in the study of global integrability of supercaloric functions (see \cite{Saa16} and the related work \cite{MR4159822}).

In the present paper,
we focus on the following question:
 \emph{What are the rough domains where forward-in-time parabolic BMO functions are forward-in-time globally exponentially integrable?} 
  The state-of-the-art is the cylindrical result from \cite{Saa16} (sufficiency) and \cite{MR4657417} (necessity), where this question is resolved on domains of product form $I\times\Omega$, where $I\subset\mathbb{R}$ is an interval and $\Omega\subset\mathbb{R}^n$ is a domain.  
  In the present article, we make the leap from cylindrical domains to general rough domains in the space time. We mention \cite{MR2053754} and \cite{Eng17}, and the references therein,
 for more on rough domains relevant for parabolic partial differential equations.

  By using time-directed Harnack chains, we provide the correct definition of H\"older domains in the parabolic time-directed context. We show that these 
  temporal parabolic H\"older 
  domains fully characterize those open sets of the space time where the parabolic forward-in-time BMO condition implies the global forward-in-time exponential integrability of the positive part of the function -- thus providing a complete answer to the question posed above.
 The precise version of our main result around a fixed time slice $t= t_0$ is as follows. 
 We refer to Sections \ref{sec:definitions-and-preliminaries} and \ref{sec:outline} for the precise definitions.

 \begin{theorem}
	\label{theorem:intro1}
	Let $n \ge 1$ and 
    $\Omega \subset \R^{n+1}$ be an open set. Let $t_0\in\mathbb{R}$.
    \begin{itemize}
        \item If the open set $\Omega$ is $t_0$-FIT H\"older domain, then there exist $C, \eta > 0$ and $s_0<t_0$ such that for all $f \in \pbmo^{+}(\Omega_{s_0}^{+})$ we have
	\begin{equation}
    \label{eq:theorem:intro1}
	    \inf_{c \in \R} \int_{\Omega_{t_0}^{+}} \exp \Bigl( \eta (f(z) - c)_{+} / \no{f}_{\pbmo^{+}(\Omega_{s_0}^{+}) } \Bigr) \, dz \le C.
	\end{equation}
    \item Conversely, if there exists some $s_0<t_0$ so that $\Omega$ is $s_0$-FIT connected and there exist $C, \eta > 0$ so that \eqref{eq:theorem:intro1} holds for all $f \in \pbmo^{+}(\Omega_{s_0}^{+})$, then $\Omega$ is a $t_0$-FIT H\"older domain. 
    \end{itemize}
\end{theorem}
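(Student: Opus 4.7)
My plan is to mirror the elliptic Smith--Stegenga argument, but replacing Euclidean balls and quasihyperbolic chains by their forward-in-time parabolic analogues, and replacing the symmetric John--Nirenberg inequality by its one-sided (positive part) version on $\pbmo^+$. The two directions of the equivalence are handled by entirely different tools, so I treat them separately.

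\textbf{Sufficiency.} Assume $\Omega$ is $t_0$-FIT H\"older. The strategy is to exhaust $\Omega_{t_0}^+$ by a Whitney-type decomposition into parabolic ``admissible'' rectangles and, to each such rectangle $Q$, assign a forward-in-time Harnack chain of controlled length $N(Q)$ connecting $Q$ back to a fixed central rectangle $Q_0 \subset \Omega_{s_0}^+$. The H\"older condition provides a quantitative bound $N(Q) \le A \log(1/\ell(Q)) + B$, or more precisely a logarithmic growth of the FIT quasihyperbolic distance from $Q_0$. Given $f \in \pbmo^+(\Omega_{s_0}^+)$, the one-sided parabolic John--Nirenberg inequality on a single admissible rectangle gives exponential decay of $|\{z \in Q : (f-f_Q)_+ > \lambda\}|$; chaining this estimate along the Harnack chain yields $(f_{Q} - f_{Q_0})_+ \lesssim N(Q) \|f\|_{\pbmo^+}$, where forward-in-time monotonicity of the chain guarantees that the intermediate oscillations only involve the positive part. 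Combining the chain estimate with local John--Nirenberg and summing $\int_Q \exp(\eta (f-c)_+/\|f\|)\, dz$ over the Whitney decomposition, the H\"older growth bound on $N(Q)$ produces a geometric series that converges for $\eta$ small enough. The choice $c := f_{Q_0}$ then yields \eqref{eq:theorem:intro1}.

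\textbf{Necessity.} Assume \eqref{eq:theorem:intro1} holds and $\Omega$ is $s_0$-FIT connected. The idea is to test the exponential integrability against a function tailored to detect the FIT quasihyperbolic distance. Concretely, fix a reference rectangle $Q_0 \subset \Omega_{s_0}^+$ and let $k(z)$ denote the FIT quasihyperbolic distance from $Q_0$ to $z$. The key step is to produce, for each target point $z^* \in \Omega_{t_0}^+$, a function $f_{z^*} \in \pbmo^+(\Omega_{s_0}^+)$ with norm bounded by an absolute constant such that $f_{z^*}(z^*) \gtrsim k(z^*)$ while $f_{z^*} \lesssim 1$ on $Q_0$ (so the normalization constant $c$ in \eqref{eq:theorem:intro1} must be $O(1)$). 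A standard candidate is a suitably smoothed version of $k$ itself, or a sum of bump functions placed along an FIT Harnack chain, each contributing unit oscillation in an admissible rectangle; one verifies that summing along time-ordered chains keeps the $\pbmo^+$ norm uniformly bounded. Inserting this test function into \eqref{eq:theorem:intro1} forces $\exp(\eta k(z^*))$ to be locally integrable, and this local integrability translates, via a covering argument, into the logarithmic upper bound on $k$ that defines the $t_0$-FIT H\"older condition.

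\textbf{Main obstacles.} I expect the hard part to be the forward-in-time asymmetry: a parabolic Harnack chain is not reversible, and the natural oscillation controls only the positive part of $f - f_Q$, so in the chaining argument one has to be careful that the inequalities are telescoping in the correct temporal direction. On the sufficiency side this requires ordering the chain so that later rectangles have later time coordinate; on the necessity side, the construction of the test function must respect this order, since otherwise the $\pbmo^+$ norm need not be controlled. A secondary technicality is the need for a parabolic analogue of the Reimann--Rychener equivalence of $\BMO_\theta$ norms adapted to FIT-admissible rectangles, which lets one switch freely between the various thickness parameters implicit in the definition of $\pbmo^+$ on a rough open set.
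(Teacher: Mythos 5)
Your high-level outline matches the paper's strategy in both directions (Whitney decomposition plus chaining on the sufficiency side; testing the exponential bound against the counting function on the necessity side), but there are two concrete gaps where the actual proof requires ideas that your proposal does not supply.

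\textbf{Sufficiency: missing the outer layer decay.} You say that summing $\int_Q \exp(\eta(f-c)_+/\|f\|)\,dz$ over the Whitney decomposition ``produces a geometric series that converges for $\eta$ small enough'', but the H\"older growth bound on the chain length alone does not make that sum converge. What you need, and what occupies all of Section \ref{sec:old} of the paper, is a quantitative estimate on the \emph{measure} of the layers $L_k = \{z\in\Omega_{t_0}^+ : 2^{-k-1}< \dist_d(z,(\Omega_{t_0}^+)^c) \le 2^{-k}\}$, namely $|L_k| \lesssim 2^{-\alpha k}$, plus $|\Omega_{t_0}^+|<\infty$. This is genuinely nontrivial here: FIT H\"older domains can be unbounded (Example 4), and the parabolic Harnack chains exit $\Omega_{t_0}^+$ near the time slice $t=t_0$, so the Buckley-type ``mean corkscrew'' argument for H-chain sets needs a modification (this is Lemmas \ref{lem:almost-bounded} and \ref{lem:MeanCorkscrews}). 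Without the layer decay, the sum over Whitney cylinders at a fixed scale has no bound, and the geometric series you invoke may simply diverge.

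\textbf{Necessity: membership of the counting function in $\pbmo^+$ needs PDE input.} You propose to test against a smoothed version of the FIT quasihyperbolic counting function $k$, but you dismiss the verification that $k\in\pbmo^+$ as routine (``one verifies that summing along time-ordered chains keeps the $\pbmo^+$ norm uniformly bounded''). This is exactly the step the authors flag as the main difference from the stationary case. Because FIT chains are time-directed and carry an intrinsic waiting time, they cannot be freely reversed, spliced, or locally perturbed the way Euclidean Harnack chains can, so one cannot show by elementary geometry that $k(z^+)-k(z^-)$ is bounded for $z^\pm$ in the forward/backward part of a well-contained cylinder. The paper proves this via Lemma \ref{lem:compactness-contradiction-chain}, a compactness argument whose contradiction is produced by \emph{lower} bounds on optimal chain lengths, and those lower bounds come from the parabolic Harnack inequality applied to the explicit Barenblatt-type solution of Trudinger's equation (Lemma \ref{lemma:Barenblatt}). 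Your proposal gives no mechanism for such lower bounds, and the alternative you float (a sum of bump functions along a chain) runs directly into the same rigidity problem: unit oscillations attached to time-ordered cylinders do not have an obviously bounded $\pbmo^+$ norm because the one-sided oscillation condition couples a forward and a backward half-cylinder that may straddle different numbers of bumps. In short, the test-function idea is correct, but the crucial lemma that makes it work is missing.

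A minor remark: your ``secondary technicality'' about a Reimann--Rychener-type equivalence of thickness parameters for $\pbmo^+$ is indeed needed and corresponds to Corollary \ref{cor:equivalences}, which the paper obtains as a byproduct of the level set estimate rather than as a standalone input.
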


    Another way of formulating our main result is through the union of exponential integrability classes. In Theorem \ref{theorem:intro2} below, we use the qualitative $t_0$-FIT connectivity assumptions for all times $t\in (t_{\Omega}^-,t_{\Omega}^+)$, where $t_{\Omega}^- := \inf \{t:(x,t) \in \Omega \}$ and $t_{\Omega}^+ := \sup \{t:(x,t) \in \Omega \}$,
        to stitch together the quantitative bounds (provided by Theorem \ref{theorem:intro1}) on each fixed time slice $t\in (t_{\Omega}^-,t_{\Omega}^+)$.
\begin{theorem}\label{theorem:intro2} 
Let
$n \ge 1$ and 
$\Omega \subset \R^{n+1}$ be open.
Suppose that for all $t_0 \in (t_{\Omega}^-,t_{\Omega}^+)$
the set $\Omega$ is $t_0$-FIT connected.
Then, the following are equivalent.
\begin{itemize}
    \item For every $t_0 \in (t_{\Omega}^-,t_{\Omega}^+)$ there exists some $s_{t_0} \in (t_{\Omega}^-, t_0)$ and $C_{t_0}, \eta_{t_0} > 0$ so that
for all $f \in \pbmo^{+}(\Omega_{s_{t_0}}^+)$ there holds that
\begin{align}\label{eq:theorem:intro2}
    \inf_{c \in \R} \int_{\Omega_{t_0}^{+}} \exp \Big( \eta_{t_0} (f(z) - c)_{+} / \no{f}_{\pbmo^{+}(\Omega_{s_{t_0}}^+)} \Big) \, dz \le C_{t_0}.
\end{align}
\item For all $t_0 \in (t_{\Omega}^-,t_{\Omega}^+)$ the set $\Omega$ is a $t_0$-FIT H\"older domain.
\end{itemize}
\end{theorem}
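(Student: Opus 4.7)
The plan is to derive Theorem \ref{theorem:intro2} from Theorem \ref{theorem:intro1} by applying the latter one time slice at a time. The blanket qualitative hypothesis that $\Omega$ is $t$-FIT connected for every $t\in(t_{\Omega}^-,t_{\Omega}^+)$ is included precisely so as to supply, at each relevant intermediate time, the connectivity input required by the necessity half of Theorem \ref{theorem:intro1}.

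For the forward direction, fix $t_0\in(t_{\Omega}^-,t_{\Omega}^+)$ and apply the sufficiency half of Theorem \ref{theorem:intro1} at that time to produce some $s_0<t_0$ (which we may take in $(t_{\Omega}^-,t_0)$, either by inspection of the proof or by a short restriction-and-monotonicity argument on the pBMO$^+$-norm in the underlying time slab) together with constants $C,\eta>0$ for which \eqref{eq:theorem:intro1} holds for every $f\in\pbmo^+(\Omega_{s_0}^+)$. Setting $s_{t_0}:=s_0$, $C_{t_0}:=C$, $\eta_{t_0}:=\eta$ makes \eqref{eq:theorem:intro2} coincide with \eqref{eq:theorem:intro1}, giving the claim at time $t_0$; as $t_0$ was arbitrary, the first bullet holds.

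For the converse direction, fix $t_0\in(t_{\Omega}^-,t_{\Omega}^+)$ and let $s_{t_0}\in(t_{\Omega}^-,t_0)$, $C_{t_0},\eta_{t_0}>0$ be as furnished by the hypothesis. Since $s_{t_0}\in(t_{\Omega}^-,t_{\Omega}^+)$, the blanket assumption guarantees that $\Omega$ is $s_{t_0}$-FIT connected, which is exactly the qualitative input needed by the necessity direction of Theorem \ref{theorem:intro1}. Combining this with the estimate \eqref{eq:theorem:intro2} yields that $\Omega$ is a $t_0$-FIT H\"older domain; since $t_0$ was arbitrary, the second bullet follows.

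I do not foresee any substantial obstacle beyond Theorem \ref{theorem:intro1} itself: Theorem \ref{theorem:intro2} is essentially a slice-by-slice bookkeeping consequence of it, and the only moving part is the aforementioned minor adjustment to ensure the auxiliary time $s_{t_0}$ falls inside the prescribed range $(t_{\Omega}^-,t_0)$ in the forward direction.
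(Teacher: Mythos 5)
Your proposal is correct and follows exactly the route the paper intends: the paper gives no standalone proof of Theorem~\ref{theorem:intro2}, presenting it instead (in the paragraph preceding it) as a slice-by-slice consequence of Theorem~\ref{theorem:intro1}, with the blanket $t_0$-FIT connectivity hypothesis supplying the qualitative input needed for the necessity direction at each time slice. One small caveat: of your two suggested ways to guarantee $s_{t_0}\in(t_{\Omega}^-,t_0)$, the ``restriction-and-monotonicity'' alternative does not quite go through on its own, since shrinking the slab to $\Omega_{s_{t_0}}^+$ only \emph{decreases} the $\pbmo^+$-norm and \emph{enlarges} the admissible function class, both of which push the estimate in the unfavorable direction; the ``inspection of the proof'' route is the one that actually works, because the proof of Theorem~\ref{theorem:intro1}'s sufficiency applies Lemma~\ref{lemma:global-jn} to $\Omega_{s_{t_0}}^+$ for an $s_{t_0}$ chosen so that the central cylinder $R_*\subset\Omega$ lies in $\Omega_{s_{t_0}}^+\cap\{t<t_0\}$, and since $R_*$ is bounded and contained in $\Omega$ its time-projection is bounded away from $t_{\Omega}^-$, giving $s_{t_0}\in(t_{\Omega}^-,t_0)$ for free.
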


 The main contribution of these results is that they confirm our definition of $t_0$-FIT H\"older domain
 to be a correct parabolic analogue of the Euclidean notion of a domain satisfying the quasihyperbolic boundary condition. 
 Unlike in the stationary Euclidean case,
 we do not have (as of now) a canonical continuous object such as the quasihyperbolic metric to work with,
 but have decided to formulate the boundary condition in terms of parabolic Harnack chains ($t_0$-FIT chains),
 similarly to the setting of metric spaces \cite{Buck99}, where the existence of rectifiable curves is not guaranteed. 
 As the time lag inherent to parabolic Harnack chains makes these objects more rigid and harder to modify compared to Harnack chains of the stationary setting,
 the definition is not easy to perturb without falling into a different class of domains.
 Moreover, constructing competitors in order to prove lower bounds is less straightforward than with the Harnack chains for harmonic functions.
 This being said,
 it is an interesting question if there is a geometric, PDE-free and curve based, analogue of the quasihyperbolic metric that would be comparable to the counting function we use in the definition of the $t_0$-FIT H\"older domains.

The proof of Theorem \ref{theorem:intro1} consists of three parts. In the first part we follow the outline of Buckley's argument from \cite{Buck99} of outer layer decay of H-chain domains. As $t_0$-FIT H\"older does not imply H-chain, since $t_0$-FIT H\"older chains cannot be spliced along the entire length of the chain, in contrast to H-chains, the argument in \cite{Buck99} does not directly apply and needs a modification. The second part of the proof completes the sufficiency part along the lines of \cite{SmiSte91b}, now using our new axiomatic $t_0$-FIT H\"older set-up instead of the elliptic-to-parabolic upgrade from \cite{Saa16}. Finally, the third part of the proof is a construction of a parabolic BMO function based on the counting function as in the definition of $t_0$-FIT H\"older domains. Here the above mentioned rigidity and reluctance to being modified of the parabolic Harnack chains is a major difference to the Euclidean stationary counterpart. We use the connection to estimates for the heat kernel (or more generally Barenblatt solutions) to produce lower bounds that we were not able to achieve by geometric means as in the stationary setting.

 \subsection*{Literature on time-directed BMO and related developments}
 By now, the literature on parabolic forward-in-time BMO includes the classical definition \cite{Moser64}
 and the first proof of the John--Nirenberg estimate \cite{Moser67}.
 Alternative proofs can be found in the geometry relevant for the heat equation in \cite{FaGa85},
 for the doubly nonlinear (Trudinger's) equation \cite{MR4657417}, and in the setting of spaces of homogeneous type \cite{Aim88}. 
 
 Tightly related to parabolic BMO,
 there exists the notion of a forward-in-time maximal function \cite{Saa18} and
 Muckenhoupt weights \cites{KiSa17,KinMy24B,MR4905323}.
 These references contain somewhat complete treatment of characterization of weighted boundedness of the forward-in-time maximal function, 
 factorization of weights and the reverse H\"older classes. 
 See also \cite{Oik22curved, LiMaOi24, KYYZ25, CKYYZ25} for related commutators and fractional integrals where the time-orientation of the operator or the weight class plays a crucial role.

 \subsection*{Plan of the paper}
 The structure of the paper is as follows. In Section \ref{sec:definitions-and-preliminaries},
 we introduce all the relevant definitions. 
 In Section \ref{sec:outline}, we give the outline of the proof of Theorem \ref{theorem:intro1}.
 The following sections are devoted to the proofs of the main lemmas described in Section \ref{sec:outline}
 and they depend on each other only through Section \ref{sec:outline}.
 Finally in Section \ref{sec:examples}, we briefly discuss the difference between metric H-chain domains and the forward-in-time domains studied in this paper.
 
 \bigskip
 
 \noindent \textit{Acknowledgments.} 
 Kim Myyryl\"ainen was supported by Charles University PRIMUS/24/SCI/020 and Research Centre program No. UNCE/24/SCI/005. Tuomas Oikari was supported by the Research Council of Finland through Project 358180.
 Olli Saari was supported by the Spanish State Research Agency MCIN/AEI/10.13039/501100011033, Next Generation EU and by ERDF ``A way of making Europe'' through the grants RYC2021-032950-I, PID2021-123903NB-I00 and the Severo Ochoa and Maria de Maeztu Program for Centers and Units of Excellence in R\&D, grant number CEX2020-001084-M.
 	\section{Geometric set-up}
	\label{sec:definitions-and-preliminaries}
	We work in the space time $\R^{n+1} = \R^{n} \times \R$ with points taking the form $(x,t) = (x_1, \ldots, x_n,t)$,
	where the last coordinate is time and the vector of the $n$ first coordinates is the space coordinate.
	Given $y \in \R^{n}$ and $s \in \R$,
	we define the notation for spatial and temporal translations
	\[
	(x,t) + y := (x+y,t), \qquad (x,t) + s := (x,t+s).
	\]
	For the projections to the space and time coordinates, we use the notation  
	\[
	\pi_{S}(x,t) = x, \quad \pi_{T}(x,t) = t.
	\]
	In what follows, 
	we use the letter $D$, possibly with subscripts,
	to denote generic positive constants always having the dependencies as specified in the statements of the proofs in which they appear. 
	We write $A \lesssim B$, $A \gtrsim B$ and $A \sim B$ to denote inequalities that hold up to such a constant.

	Given $p \in (1,\infty)$,
	we define the parabolic distance between two points $z, w \in \R^{n+1}$ as 
	\[
	d(z,w) := \max (|\pi_{S}(z-w)|, |\pi_{T}(z-w)|^{1/p}).
	\] 
	Throughout the paper all distances are with the parabolic distance, unless otherwise specified. 
	The parameter $p$ is fixed and we do not include it in the notation.
	The parabolic distance is a metric and
	together with the $(n+1)$-dimensional Lebesgue measure,
	it endows the space time with the structure of a metric space with a doubling measure.
	A metric ball with respect to the parabolic distance $d$ is called a parabolic cylinder, cylinder for short.
	For a cylinder $P = B_d(z,\rho)$, its radius is denoted by $r(P) = \rho$ and its center by $c(P) = z$.
	By $\lambda P$ we denote the parabolic cylinder with the centre $c(P)$ and radius $\lambda r(P),$ for $\lambda>0.$
	Further, for $a>0$, we denote
	\begin{equation}
		\label{gdef:lags}
		P^{\pm} = P \pm 3 r(P)^{p}, \quad P^{\pm,a} = P \pm a r(P)^{p}, \quad \widetilde{P}^{a} = \conv (P^{+,a} \cup P^{-,a}).
	\end{equation}
		Notice that the time lag between $P^{-,a}$ and $P^{+,a}$ is $(a-1)r(P)^p,$ which is positive if  $a >1.$ 

	We next introduce $\fit_A$ chains (FIT chains for short), which are time-directed versions of Harnack chains.
	\begin{definition}[$\fit_A$ links and $\fit_A$ chains]\label{defn:FITchain}
		Let $A \in  [2,\infty)^{2}$ be fixed.
		\begin{itemize}
			\item 
			An ordered pair $(P,Q)$ of parabolic cylinders is a $\op{FIT}_{A}$ link (forward-in-time link) provided that
			\begin{align*}
				1/A_1\leq \frac{|Q|}{|P|}\leq A_1,\qquad 
				|P^+ \cap Q^- | \geq  \frac{1}{A_2} |Q\cup P|.
			\end{align*}
			\item An $N$-tuple of parabolic cylinders $(Q_j)_{j=1}^N$ is a $\op{FIT}_{A}$ chain 
			provided that each $(Q_j,Q_{j+1})$ with $1\le j \le N-1$ is a $\op{FIT}_{A}$ link. 
			Such a chain is said to connect $Q_1$ to $Q_N$.
		\end{itemize}
	\end{definition}
	
	\begin{rem} Note that by the condition involving $A_2$ it follows that $|P|\geq |P^+ \cap Q^- | \geq |Q\cup P|/A_2 \geq |Q|/A_2$ (and similarly with $P,Q$ swapped) and hence $1/A_2 \leq |P|/|Q| \leq A_2$ so that the condition involving $A_1$ follows with $A_1 = A_2$.  However, for a possible future reference, we have decided to track volume ratio and intersection separately.
	\end{rem}
	As it were,
	a $\fit_A$ chain is a time-directed version of a Harnack chain.
	Harnack chains are used to describe quantitative connectivity properties of open sets in the Euclidean space.
	Similarly, we will use FIT chains to discuss quantitative and time-directed connectivity in the space time. 
	To do so,
	we will next, in the three definitions to follow, relate FIT chains to a reference domain $\Omega.$
	
	\begin{definition}[$\fit_A$-distance between cylinders]
		Let $\Omega \subset \R^{n+1}$ be open, 
		let $A = (A_1,A_2,A_3) \in [2,\infty)^{3}$ and 
		let $(P,Q)$ be an ordered pair of parabolic cylinders contained in $\Omega$.
		\begin{itemize}
			\item 		We define $\mathcal{C}_{\Omega,A}(P,Q)$ as the family of all $\op{FIT}_{(A_1,A_2)}$ chains $C$ connecting $P$ to $Q$ in the sense of Definition \ref{defn:FITchain}
			so that $A_3 \widetilde{R}^{3} \subset \Omega$ for all $R \in C$.
			\item 		We define the $\fit_A$ distance from $P$ to $Q$ in $\Omega$ as
			\begin{align}\label{eq:QHdistPrel}
				k_{\Omega,A}(P,Q) :=  \inf \{ \# C : C \in   \mathcal{C}_{\Omega,A}(P,Q )\},
			\end{align}
			whenever $ \mathcal{C}_{\Omega,A}(P,Q) \ne \varnothing$, and as $k_{\Omega,A}(P,Q) = \infty$, when $  \mathcal{C}_{\Omega,A}(P,Q) = \varnothing$.
		\end{itemize}
	\end{definition}
	
	FIT connectivity with respect to time will be defined as a qualitative property.
	We ask that every point after $t_0$ can be connected to a reference parabolic cylinder before $t_0$ by a $(t_0,A)$-FIT chain.
	Thus, relative to the open set $\Omega$,
	we define the future and past parts
	\begin{align*}
		\Omega_{t_0}^+ = \{ (x,t)\in\Omega : t > t_0 \},\qquad 
		\Omega_{t_0}^- = \{ (x,t)\in\Omega : t < t_0 \}.
	\end{align*}    

	\begin{definition}[$t_0$-FIT connectivity]\label{defn:FITConnected}
		Let $t_0 \in \R$ .
		An open set $\Omega \subset \mathbb{R}^{n+1}$ is said to be $t_0$-FIT connected if 
		there exists a parabolic cylinder $R_{*} = R_{*}(t_0,\Omega)\subset \Omega_{t_0}^-$ such that for all $z\in \Omega_{t_0}^+$ there exists a parabolic cylinder $Q\subset \Omega$ and $A \in [2,\infty)^{3}$ such that $z\in Q^+ \subset \Omega_{t_0}^+$ and $k_{\Omega,A}(R_{*},Q)< \infty.$ 
		The parabolic cylinder $R_{*}$ is said to be the  central cylinder.
	\end{definition}

The parameter $A$ in the definition of $t_0$-FIT connectivity carries no relevant information.
In fact, we may fix $A$ to be $(2,2,2)$ for all points without loss of generality due to the following observation.

\begin{proposition}
\label{prop:varying-chains}
Let $t_0 \in \R$ and $A,A' \in [2,\infty)^{3}$.
Let $\Omega \subset \mathbb{R}^{n+1}$ be open.
Then there exists $\varepsilon = \varepsilon(A,A',n,p) \in (0,1)$ such that the following holds. 

Let $z \in \Omega_{t_0}^{+}$.
If $Q \subset \Omega_{t_0}^{+}$ and $R_* \subset \Omega_{t_0}^{-}$
satisfy $z \in Q^{+}$ and $k_{\Omega,A}(R_{*},Q)< \infty$,
then 
for
$Q' = [B_d(z,\varepsilon r(Q))]^-$
and $R_{*}' = \varepsilon R_{*}$
we have
\[
  k_{\Omega,A'}(R_{*}', Q')  \le c(A,A',n,p)k_{\Omega,A}(R_{*},Q).
\]
\end{proposition}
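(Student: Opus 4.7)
The plan is to prove this proposition by a monotonicity-and-refinement argument: the triple $A$ in the definition of a FIT chain is inessential up to constants, and shrinking the two endpoint cylinders costs only $O(1)$ additional links. I first observe a monotone reduction: if $A_1 \le A_1'$, $A_2 \le A_2'$, and $A_3 \ge A_3'$, then any $\op{FIT}_A$ chain with the $A_3$-containment is tautologically a $\op{FIT}_{A'}$ chain with the $A_3'$-containment. Hence I may assume the target $A'$ genuinely strengthens some condition, which is where the actual work lies.

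The core is a per-link refinement lemma. Given a single $\op{FIT}_{(A_1,A_2)}$ link $(P,Q)$ with $A_3\widetilde{P}^3,\, A_3\widetilde{Q}^3 \subset \Omega$, I construct a $\op{FIT}_{(A_1',A_2')}$ chain of length at most $L = L(A,A',n,p)$ from $\varepsilon P$ to $\varepsilon Q$ whose members $R$ all satisfy $A_3'\widetilde{R}^3 \subset \Omega$. The intersection condition $|P^+ \cap Q^-| \ge |P \cup Q|/A_2$ forces $r(P) \sim r(Q) =: r$ with constants depending on $A_2$, and places $c(P)$ and $c(Q)$ at parabolic distance $O(r)$ from each other. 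I then parametrize a short space-time polygonal path from $c(P)$ to $c(Q)$ and place cylinders of radius $\varepsilon r$ along it in increments of parabolic size $\asymp \varepsilon r$. The volume ratio condition is trivial because all cylinders have the same radius; the intersection condition for consecutive pairs holds with constant $A_2'$ by an elementary geometric computation as long as $\varepsilon$ is chosen small relative to $A_2'$. The number of interpolating cylinders is a function of $A,A',n,p$ alone.

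Concatenating the refinements of every link in the original chain $(Q_j)_{j=1}^N$ produces a $\op{FIT}_{A'}$ chain of length $\le L\cdot N$ from $\varepsilon R_* = \varepsilon Q_1$ to $\varepsilon Q_N$. One further application of the same interpolation scheme inside $Q_N$ connects $\varepsilon Q_N$ to $Q' = [B_d(z,\varepsilon r(Q_N))]^-$: since $z \in Q_N^+$, the centers of $\varepsilon Q_N$ and $Q'$ lie at parabolic distance $O(r(Q_N))$ and both sit inside $A_3\widetilde{Q_N}^3 \subset \Omega$, so this endpoint adjustment contributes only $O_{A,A',n,p}(1)$ links. The main obstacle I anticipate is verifying the containment $A_3'\widetilde{R}^3 \subset \Omega$ for every new cylinder $R$: each such $R$ has radius $\varepsilon r$ with center within parabolic distance $O(r)$ of $c(P)$ or $c(Q)$, so $A_3'\widetilde{R}^3$ is contained in $A_3\widetilde{P}^3 \cup A_3\widetilde{Q}^3 \subset \Omega$ provided $\varepsilon$ is small enough relative to the ratio $A_3/A_3'$. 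Choosing a single $\varepsilon = \varepsilon(A,A',n,p) \in (0,1)$ small enough to simultaneously accommodate the intersection and containment thresholds yields the required inequality $k_{\Omega,A'}(R_*',Q') \le c(A,A',n,p)\,k_{\Omega,A}(R_*,Q)$.
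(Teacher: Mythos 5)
Your proposal is correct and follows essentially the same strategy as the paper: refine each $\op{FIT}_A$ link of the original chain into a bounded-length $\op{FIT}_{A'}$ subchain of $\varepsilon$-shrunk cylinders, verify the $A_3'$-containment using the $A_3$-containment of the coarse cylinders, and concatenate. The paper compresses the per-link refinement into a one-line ``obviously there exist $\varepsilon, c$'' statement, whereas you spell it out, and you also treat the final adjustment from $\varepsilon Q_N$ to $Q' = [B_d(z,\varepsilon r(Q_N))]^-$ more explicitly than the paper does (the paper's chain nominally terminates at $B_d(z,\varepsilon r(P_N))$ rather than at $Q'$); this extra $O(1)$ forward-in-time step is a genuine point of care that the paper glosses over.
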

\begin{proof} 
Fix $\{P_i\}_{i=1}^{N} \in \mathcal{C}_{\Omega,A}(R_{*},Q)$.
For $1 \leq i < N$,
let $z_i$ be the center of $P_{i}$,
and let $z_{N} := z \in P_{N}^{+}$.
Obviously there exist $\varepsilon \in (0,1)$ and $c$, only depending on $n$, $p$, $A$ and $A'$;
so that for all $1 \le i < N$
\[
k_{A_{3}\widetilde{P}_i^{3} \cup A_{3}\widetilde{P}_{i+1}^{3},A'}(B_d( z_i,\varepsilon r(P_i)),B_d( z_{i+1},\varepsilon r(P_{i+1})) ) \le c.
\]
Let 
\[
C_i \in  \mathcal{C}_{{A_{3}\widetilde{P}_i^{3} \cup A_{3}\widetilde{P}_{i+1}^{3},A'}}(B_d( z_i,\varepsilon r(P_i)),B_d( z_{i+1},\varepsilon r(P_{i+1})) )
\]
have length at most $c+1$. 
Since $A_{3}\widetilde{P}_i^{3} \subset \Omega$ for all $i$, 
by the choice of the chain $\{P_i\}_{i=1}^{N}$,
we see that 
\[
C' := C_1 \cup \cdots \cup C_{N-1} \in \mathcal{C}_{\Omega,A'}( B_d( z_1,\varepsilon r(P_1)), B_d( z_N,\varepsilon r(P_N))).
\]
As 
\[
\# C' \le \sum_{i=1}^{N-1} \# C_i \le (N-1)c, 
\]
the claim follows.
\end{proof}

	To make FIT connectivity quantitative, 
	we impose a growth bound on the FIT distance to the central cylinder.
	This is the content of the following definition. 
	
	\begin{definition}[$(t_0,A)$-FIT H\"older domain]\label{defn:FITHolder}
		Let $t_0 \in \R$ and $A \in [2,\infty)^{3}$.
		We say that an open set $\Omega \subset\mathbb{R}^{n+1}$ is a $(t_0,A)$-FIT H\"older domain if the following points hold.
		\begin{itemize}
			\item The set $\Omega$ is $t_0$-FIT connected with a central cylinder $R_{*} \subset \Omega_{t_0}^{-}.$
                \item There exists $K > 0$ 
			such that for all $z \in \Omega_{t_0}^{+}$ there holds that 
				\begin{align}\label{eq:defn:FITHolder}
                \inf_{ z\in Q^+, Q\subset  \Omega^+_{t_0}}
				k_{\Omega,A}(R_{*},Q) \le K \log_2(1+1/\dist_d(z,(\Omega^+_{t_0})^{c})),
			\end{align}
		where $k_{\Omega,A}(R_{*},Q)$ is as defined on the line \eqref{eq:QHdistPrel}.
		\end{itemize}
	\end{definition}
	
\begin{rem}
\label{rmk:chain-varying-hoelder}
By Proposition \ref{prop:varying-chains},
the class of $(t_0,A)$-FIT H\"older domains is independent of the parameter $A \in [2,\infty)^{3}$.
The value of the parameter $K$ in the definition, however, depends on $A$.
For transparency,
we keep track when $A$ has to be changed and we use the notation $(t_0,A)$-FIT H\"older during most of the paper.
We reserve the shorter expression $t_0$-FIT H\"older domain for $(t_0,(2,2,2))$-H\"older domains.
\end{rem}

	The following estimate on optimal lengths of parabolic Harnack chains 
	connecting two parabolic cylinders in free space time will be crucial in what follows.
	The upper bound for the length of the chain is straightforward.
	One computes the length of a FIT chain that adjusts separately the space coordinate, the time coordinate and the scale parameter.
	The argument for the lower bound goes through the parabolic Harnack inequality. 

    	\begin{lemma}
		\label{lemma:Barenblatt}
		Let $A \in [2,\infty)^{2}$, $(x_0,t_0),(y_0,s_0) \in \R^{n+1}$ and $R_1 = B_d((x_0,t_0);r_0)$.
		Let $\mathcal{R}$ be the family of all parabolic cylinders containing $(y_0,s_0)$.
		Then
			\begin{equation*}
				\inf_{R \in \mathcal{R}} \quad \inf_{C \in \mathcal{C}_{A,\R^{n+1}}(R_1,R)} \# C 
				\gtrsim 
				\log \frac{s_0-t_0+2r_0^{p}}{r_0^{p}} + \biggl(\frac{|y_0-x_0|^{p}}{s_0-t_0 + 2r_0^{p}} \biggr)^{\frac{1}{p-1}},
		\end{equation*}
		where the implicit constants depend on $n$, $p$ and $A$.
        
        Furthermore, if $s_0 - t_0 \ge 10 r_0^{p}$, then
        \begin{equation*}
				\inf_{R \in \mathcal{R}} \quad \inf_{C \in \mathcal{C}_{A,\R^{n+1}}(R_1,R)} \# C 
				\lesssim 
				\log \frac{s_0-t_0+2r_0^{p}}{r_0^{p}} + \biggl(\frac{|y_0-x_0|^{p}}{s_0-t_0 + 2r_0^{p}} \biggr)^{\frac{1}{p-1}}.
		\end{equation*}
	\end{lemma}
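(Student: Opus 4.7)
The plan is to treat the two bounds separately: the upper bound by constructing an explicit competing chain, and the lower bound by iterating the parabolic Harnack inequality against a Barenblatt-type fundamental solution.

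For the upper bound, assume $s_0-t_0\geq 10 r_0^p$ and set $D:=s_0-t_0$, $L:=|y_0-x_0|$, and
$$
r_* := \max\!\Bigl(r_0,\ \min\bigl(D^{1/p},(D/L)^{1/(p-1)}\bigr)\Bigr).
$$
I would execute the construction in three stages. First, inflate $R_1$ to a cylinder of radius $\sim r_*$ with $O(\log(r_*/r_0))$ FIT links, using that each scale doubling satisfying Definition \ref{defn:FITchain} can be produced by $O(1)$ links. Second, at scale $r_*$ translate the spatial center from $x_0$ toward $y_0$; each link advances the space center by $\sim r_*$ and time by $\sim r_*^p$, so this stage uses $O(L/r_*)$ links and accrues total time $\sim L\,r_*^{p-1}\lesssim D$ by the choice of $r_*$. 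Third, use $O(1)$ further links (possibly preceded by $O(\log)$ additional scale adjustments to absorb any remaining time gap) to land in a cylinder $R$ containing $(y_0,s_0)$. The first and third stages contribute $O(\log((D+r_0^p)/r_0^p))$ links and the second contributes $O((L^p/(D+r_0^p))^{1/(p-1)})$, matching the stated upper bound.

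For the lower bound, let $\mathcal{B}$ be the Barenblatt fundamental solution of the $p$-parabolic equation with source at $(x_0, t_0 - 2 r_0^p)$ (the shifted Gauss--Weierstrass kernel when $p=2$). The explicit asymptotics of $\mathcal{B}$ give
$$
\log\frac{\mathcal{B}(x_0,t_0)}{\mathcal{B}(y_0,s_0)} \sim \log \frac{s_0-t_0+2r_0^p}{r_0^p} + \Bigl(\frac{|y_0-x_0|^p}{s_0-t_0+2r_0^p}\Bigr)^{\frac{1}{p-1}}.
$$
Given any chain $\{P_j\}_{j=1}^N \in \mathcal{C}_{A,\R^{n+1}}(R_1,R)$ with $R\ni (y_0,s_0)$, the volume ratio and the space-time overlap imposed on each FIT link furnish exactly the geometric data required to invoke the parabolic Harnack inequality on two consecutive cylinders, yielding multiplicative comparisons $\mathcal{B}(\zeta_j) \leq C_0\, \mathcal{B}(\zeta_{j+1})$ for representative points $\zeta_j\in P_j$, $\zeta_{j+1}\in P_{j+1}$ with $\pi_T(\zeta_j) < \pi_T(\zeta_{j+1})$, for some universal $C_0=C_0(A,n,p)>1$. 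Iterating from $R_1$ to $R$ and taking logarithms yields $N\gtrsim \log(\mathcal{B}(x_0,t_0)/\mathcal{B}(y_0,s_0))$, which combined with the displayed asymptotics gives the lower bound.

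The main obstacle is bookkeeping the interplay between the FIT link conditions on volume ratio and on space-time overlap on one hand, and the intrinsic scaling of the parabolic Harnack inequality on the other. For $p\neq 2$ this is the delicate point, as one must use the precise Barenblatt profile for the $p$-Laplace heat equation, whose spatial decay of order $\exp(-c(|x|^p/t)^{1/(p-1)})$ is exactly what produces the exponent $1/(p-1)$ appearing in the stated bound; the matching of this exponent on both sides is what makes the estimates sharp.
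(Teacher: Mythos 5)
Your strategy is the same as the paper's (explicit Barenblatt-type profile plus iterated parabolic Harnack for the lower bound, explicit chain construction for the upper bound), but there is a genuine error in the choice of PDE that breaks the argument for $p\neq 2$. You appeal to the Barenblatt fundamental solution of the \emph{$p$-Laplace heat equation} $\partial_t u - \dive(|\nabla u|^{p-2}\nabla u)=0$ and claim it decays like $\exp(-c(|x|^p/t)^{1/(p-1)})$. Neither half of this is right for $p>2$: the Barenblatt solution of that equation is compactly supported with a polynomial profile, and its Harnack inequality is \emph{intrinsic} --- the waiting time between the two measurement sets depends on the value of the solution --- so it is not compatible with Harnack iteration along FIT chains measured in the fixed parabolic metric $d$ (which is what makes $N\gtrsim\log(\text{ratio})$ possible).

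The paper instead uses \emph{Trudinger's doubly nonlinear equation} $\partial_t(u^{p-1}) - c_0\dive(|\nabla u|^{p-2}\nabla u)=0$. This equation is homogeneous in $u$, so its parabolic Harnack inequality (Theorem 2.1 of \cite{KiKu07}) is scale-invariant under the $d$-metric with the same $p$, and it admits the explicit positive global solution
\[
u(x,t)=t^{-\frac{n}{p(p-1)}}\exp\Bigl(-\tfrac{p-1}{p}\bigl(\tfrac{|x|^p}{pt}\bigr)^{\frac1{p-1}}\Bigr),
\]
which has exactly the exponential decay you wrote down and hence produces the $(\cdot)^{1/(p-1)}$ exponent in the statement. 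Once you substitute this equation and this solution, the rest of your lower-bound argument (normalize to $R_1=B_d(0,1)$, iterate Harnack link by link using the volume-ratio and overlap conditions in Definition~\ref{defn:FITchain}, take logs) goes through as in the paper. Your upper-bound construction is fine and is essentially what the paper calls ``counting the parabolic cylinders of any reasonable FIT chain''; the hypothesis $s_0-t_0\geq 10 r_0^p$ is what guarantees that the time advance accumulated during lateral translation can be absorbed, which your choice of $r_*$ is implicitly arranging.
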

	
	\begin{proof}
		The claim is invariant under translations of the space time so we may assume $t_0 > 2r_0^{p}$ so that $R_0,R \subset \R^{n} \times (0,\infty)$. 
		To prove the claimed lower bound ($\gtrsim$),
		we first recall that 
		\[
		u(x,t)=t^{\frac{-n}{p(p-1)}}
		e^{-\frac{p-1}p\bigl(\frac{|x|^p}{pt}\bigr)^{\frac1{p-1}}} 
		\]
		is a solution to 
		\begin{equation}
			\label{eq:trudingers-equation}
			\partial_t (u^{p-1}) - c_0 \dive(|\nabla u|^{p-2} \nabla u) = 0
		\end{equation}
		in $\R^{n} \times (0,\infty)$ for some constant $c_0 = c_0(p,n)>0$.
		By positivity of $u$,
		which is obvious from the explicit formula,
		we see that the parabolic Harnack inequality for \eqref{eq:trudingers-equation} (Theorem 2.1 in \cite{KiKu07}) applies.
		Hence for every $R \in \mathcal{R}$ such that there exists $C \in \mathcal{C}_{A,\R^{n+1} }(R_1,R)$,
		we conclude that there exists $c = c(n,p) > 1$ so that 
		\[
		\frac{u(x_0,t_0-r_0^p)}{u(y_0,s_0)} \le c^{\# C}.
		\]
		As both the parabolic Harnack inequality 
		and the number of links in a chain stay invariant under the coordinate change 
		\[
		(x,t) \mapsto \biggl( \frac{x-x_0}{r_0},\frac{t-t_0}{r_0^{p}} + 2 \biggr),
		\]
		we conclude 
		\[
		\# C \gtrsim \log \frac{u(0,1)}{u((y_0 - x_0)/r_0,(s_0-t_0 )/r_{0}^{p} + 2 )}
		\sim \log \biggl(2 + \frac{s_0-t_0 }{r_0^p} \biggr) + \biggl(\frac{|y_0-x_0|^{p}}{s_0-t_0 + 2r_0^{p}} \biggr)^{\frac{1}{p-1}},
		\]
		which is the claimed lower bound.
		The upper bound follows by counting the parabolic cylinders of any reasonable FIT chain in $\mathcal{C}_{A, \R^{n+1}}(R_1,R)$.
	\end{proof}

	\section{The main line of the proof}
	\label{sec:outline}
	
	\begin{definition}
		\label{def:PBMO}
		Let $\Omega \subset \R^{n+1}$ be open.
		Let $a > 1$, $q > 0$ and $\theta \ge 1$.
		We denote (using the notation in \eqref{gdef:lags})
		\begin{equation*}
			\| f \|_{{\pbmo}^{+}(a,q,\theta,\Omega)}
			= \sup_{ \theta \widetilde{P}^{a}  \subset \Omega } \ \inf_{c \in \R} \Big( \fint_{P^{+,a}}(f(z)-c)_{+}^q \, dz+ \fint_{P^{-,a}}(f(z)-c)_{-}^q\, dz \Big)^{1/q},
		\end{equation*} 
		where the supremum is over a family of parabolic cylinders. 
		We set 
		\[
		\no{\cdot}_{\pbmo^{+}(\Omega)} := \no{\cdot}_{{\pbmo}^{+}(3,1,1,\Omega)}
		\]
		and define the class 
		$\pbmo^{+}(\Omega)$ as the family of  those locally integrable functions $f$ such that $\no{f}_{\pbmo^{+}(\Omega)}$ is finite.
	\end{definition}

	\begin{proof}[Proof of Theorem \ref{theorem:intro1}]
		We will cover the domain by Whitney type parabolic cylinders,
		which are then connected to the  central cylinder of the domain by a $(t_0,A)$-FIT chain inside $\Omega$.
		The lengths of these chains blow up when approaching the boundary of the domain.
		Hence the first task is to show that the measures of the layers close to the boundary decay sufficiently fast.
		
		\begin{lemma}
			\label{lemma:OLD}
			Let $A \in [2,\infty)^{3}$, $t_0 \in \R$ and $R > 0$.
			Assume that $\Omega$ is a $(t_0,A)$-FIT H\"older domain. 
			Then, there exist constants $D_1,\alpha > 0$ such that the following holds.
			For an integer $k \ge 0$, 
			denote 
			\begin{align*}
					L_k := \{ z \in \Omega_{t_0}^{+} : 2^{-k-1} < \dist_d(z,(\Omega_{t_0}^+)^{c}) \le 2^{-k}\}.
			\end{align*}
			Then, there holds that 
			\begin{align}\label{eq:OLD:1}
				|L_k | \le D_1 2^{-\alpha k}.
			\end{align}
			In addition,
			\begin{align}\label{eq:OLD:2}
				|\Omega_{t_0}^+| < \infty.
			\end{align}
		\end{lemma}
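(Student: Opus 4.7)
The plan is to adapt Buckley's outer-layer decay argument from \cite{Buck99} to the FIT setting. I would begin by producing a Whitney-type covering of $\Omega_{t_0}^{+}$: for each $k\ge 0$ take a maximal disjoint collection $\{Q_j^{k}\}_j$ of parabolic cylinders of radius $\sim 2^{-k}$ centered in $L_k$ whose fixed dilations remain inside $\Omega$. By the doubling property of parabolic volume and bounded overlap, the layer $L_k$ is covered by at most $N_k$ such cylinders, each of parabolic volume $\sim 2^{-(n+p)k}$, so that
\[
|L_k|\lesssim N_k\cdot 2^{-(n+p)k}.
\]

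Next I would turn the FIT H\"older hypothesis into chain-length information: by \eqref{eq:defn:FITHolder} together with Proposition \ref{prop:varying-chains}, every generation-$k$ Whitney cylinder $Q$ admits a FIT chain from $R_{*}$ to (a cylinder comparable to) $Q$ of length at most $Kk+O(1)$. The combinatorial core is then a branching bound: the number of essentially distinct FIT chains from $R_{*}$ of length at most $N$ is bounded by $M^{N}$ for some constant $M=M(n,p,A)$. Indeed, the link conditions $|Q|/|P|\sim 1$ and $|P^{+}\cap Q^{-}|\ge A_2^{-1}|P\cup Q|$, together with the ambient constraint $A_3\widetilde{Q}^{3}\subset\Omega$, localize every admissible successor $Q$ to a bounded region in doubling terms, leaving only boundedly many choices per step modulo controlled overlap. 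Combining with the chain-length bound yields $N_k\lesssim M^{Kk}$ and therefore
\[
|L_k|\lesssim M^{Kk}\cdot 2^{-(n+p)k}=:D_1\cdot 2^{-\alpha k},
\]
with $\alpha=(n+p)-K\log_{2}M$. The finiteness \eqref{eq:OLD:2} then follows by a geometric series once \eqref{eq:OLD:1} is in place, since the layers $L_k$ partition $\Omega_{t_0}^{+}$ up to a set of measure zero.

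The hard part, as signaled in the introduction, is the branching bound together with the verification that $\alpha>0$. In Buckley's H-chain setting the counting is executed by splicing sub-chains at intermediate scales, a manoeuvre that fails here because the rigid time lag built into the FIT link condition \eqref{gdef:lags} forbids concatenation without violating the link requirement. One must instead argue directly from the forward-backward intersection condition $|P^{+}\cap Q^{-}|\ge A_2^{-1}|P\cup Q|$, which already carries a strong space-time localization of admissible successors, and carry out the counting at a sufficiently coarse scale, using Proposition \ref{prop:varying-chains} to adjust $A$ if needed, so that the effective branching constant $M$ satisfies $K\log_{2}M<n+p$ and $\alpha$ is strictly positive.
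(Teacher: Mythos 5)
Your overall plan — adapt Buckley's outer-layer decay to the FIT setting — is on track, but the combinatorial engine you propose is not Buckley's and does not close. The chain-counting argument is the gap. Even granting a branching bound of the form ``at most $M^{N}$ distinct chains of length $N$'' (which is itself unclear, because a single chain can ``terminate'' near many distinct Whitney cylinders of generation $k$), you get $|L_k|\lesssim M^{Kk}\,2^{-(n+p)k}$, hence you need $K\log_{2}M<n+p$. But $K$ is the H\"older constant of the domain and can be arbitrarily large relative to $n+p$, while $M$ is a fixed geometric constant $\ge 2$; Proposition \ref{prop:varying-chains} lets you change $A$ but only at the cost of inflating $K$, so there is no room to tune $\alpha>0$. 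You flagged this yourself as ``the hard part,'' and that is exactly where the idea breaks: this route cannot produce a strictly positive $\alpha$.

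The paper's proof (and Buckley's original one) avoid counting chains entirely. The actual mechanism is a \emph{corkscrew/overlap} argument. For each $z\in L_k^{+}$ one looks at dyadic annuli $B_j\setminus B_{j-1}$ centered at $z$; because the chain to $R_*$ has length $\lesssim Kk$ while it must visit $\gtrsim k/\log(A_1)$ of these annuli (Step 1--2 of Lemma \ref{lem:MeanCorkscrews}), at least a fixed fraction of the annuli contain few chain centers, and in each such annulus one extracts a cylinder $P_{z,j}\subset\Omega_{t_0}^{+}$ of comparable size with $z\in D_1 P_{z,j}$ (Step 3). After a $5r$-covering one gets a \emph{pairwise disjoint} family $\mathcal{F}(k)$ with $\sum_{P\in\mathcal{F}(k)}D_2\,\mathbf{1}_{D_1P}\ge k\,\mathbf{1}_{L_k^{+}}$. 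The decay then comes from the standard exponential-integrability estimate for $\sum_B \mathbf{1}_{DB}$ over a disjoint family (via the Hardy--Littlewood maximal function), combined with the ``almost-boundedness'' bound $L_k\subset B(c_{R_*},D_3 k^{D_4})$ from Lemma \ref{lem:almost-bounded}, which is what replaces the compactness Buckley has for free for H-chain sets. Your proposal also omits this almost-boundedness step entirely; without it the right-hand side of the layer estimate need not be finite, so even a correct per-layer bound would not yield \eqref{eq:OLD:2}. Note finally that the splitting into $L_k^{+}$ and $L_k\setminus L_k^{+}$ is essential precisely because, as you correctly observe, FIT chains cannot be spliced near $\{t=t_0\}$; the paper handles the thin strip $L_k\setminus L_k^{+}$ by a direct volume bound using the almost-boundedness, not by chains at all.
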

		
		Lemma \ref{lemma:OLD} is proved in Section \ref{sec:old}.
		Once the decay of the outer layers is known,
		we proceed to prove a level set estimate.
		We estimate the parabolic oscillations in Whitney type cylinders by the local parabolic John--Nirenberg lemma.
		The remaining part of the proof bounds the difference of averages in the covering Whitney cylinders and the  central cylinder of the domain along a $(t_0,A)$-FIT chain using the
		$\pbmo^{+}(\Omega)$ condition,  an
		estimate on the length of the chain, 
		and the outer layer decay from Lemma \ref{lemma:OLD}.
		
		\begin{lemma}
			\label{lemma:global-jn}
			Fix $A \in [2,\infty)^{3},$
            $t_0 \in \R$ and let $\Omega$ be a $(t_0,A)$-FIT H\"older domain.  
            Then there exists constants $D = D(n,p,\Omega^+_{t_0})$ and $\alpha = \alpha(n,p,\Omega^+_{t_0})>0$ such that for all $f \in {\op{PBMO}}^{+}(3,1,A_3,\Omega)$
			and all $\lambda > 0$,
			\[
			\inf_{c \in \mathbb{R}} |\{ z \in \Omega^+_{t_0} : (f(z) - c)^{+} > \lambda \}  
			\le DA_3 
            \exp \biggl(  - \frac{\alpha \lambda}{A_2 D K\no{f}_{\op{PBMO}^+(3,1,A_3,\Omega)}}
			\biggr).
			\]
		\end{lemma}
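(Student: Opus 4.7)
The plan is to follow a Whitney/John--Nirenberg scheme adapted to the forward-in-time setting. I would cover $\Omega_{t_0}^{+}$ by a Whitney-type family $\{Q_i\}$ of parabolic cylinders with $A_3\widetilde{Q}_i^{\,3} \subset \Omega$ and sizes $r(Q_i)$ comparable to the parabolic distance of $Q_i$ to $(\Omega_{t_0}^{+})^{c}$, arranged so that $\{Q_i^{+,3}\}$ covers $\Omega_{t_0}^{+}$ with bounded overlap and so that each $Q_i$ realises (up to constants) the infimum in Definition \ref{defn:FITHolder} at every point of $Q_i^{+}$. On each $Q_i^{+,3}$ the local parabolic forward John--Nirenberg lemma applies directly; the remaining task is to telescope the $\pbmo^{+}$ centers along a $(t_0,A)$-FIT chain from $Q_i$ back to the central cylinder $R_{*}$ and sum over generations $\mathcal{G}_k := \{Q_i : r(Q_i) \sim 2^{-k}\}$, paying for the loss with the outer layer decay of Lemma \ref{lemma:OLD}.

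The core step is the link-by-link comparison of $\pbmo^{+}$ centers. For $Q_i \in \mathcal{G}_k$, Definition \ref{defn:FITHolder} provides a chain $R_{*} = S_1, \ldots, S_N = Q_i$ with $N \le C K k$. Let $c_j$ be a near-minimizer of the $\pbmo^{+}(3,1,A_3,\Omega)$ expression at $S_j$, write $c_{*} := c_1$, and set $c_{Q_i} := c_N$. Across a single link $(S_j, S_{j+1})$, the overlap $S_j^{+} \cap S_{j+1}^{-}$ has measure at least $|S_j \cup S_{j+1}|/A_2$; on $S_j^{+,3}$ the $\pbmo^{+}$ condition controls $(f-c_j)_{+}$, while on $S_{j+1}^{-,3}$ it controls $(f-c_{j+1})_{-}$. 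A Chebyshev argument on the overlap produces a point where both quantities are simultaneously bounded by a multiple of $A_2\no{f}_{\pbmo^{+}(3,1,A_3,\Omega)}$, forcing $c_{j+1} - c_j \le C A_2 \no{f}_{\pbmo^{+}(3,1,A_3,\Omega)}$. Telescoping yields $c_{Q_i} - c_{*} \le C A_2 K k \no{f}_{\pbmo^{+}(3,1,A_3,\Omega)}$ for every $Q_i \in \mathcal{G}_k$.

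To finish, I would write $(f - c_{*})_{+} \le (f - c_{Q_i})_{+} + (c_{Q_i} - c_{*})_{+}$ on $Q_i^{+,3}$ and apply the local parabolic John--Nirenberg lemma: for $\lambda > C A_2 K k \no{f}_{\pbmo^{+}(3,1,A_3,\Omega)}$,
\begin{equation*}
|\{z \in Q_i^{+,3} : (f - c_{*})_{+} > \lambda\}| \lesssim |Q_i|\exp\!\Bigl(-\alpha(\lambda - C A_2 K k \no{f}_{\pbmo^{+}(3,1,A_3,\Omega)})/\no{f}_{\pbmo^{+}(3,1,A_3,\Omega)}\Bigr).
\end{equation*}
Summing over $Q_i \in \mathcal{G}_k$ and then over $k$, and invoking $|L_k| \le D_1 2^{-\alpha_1 k}$ from Lemma \ref{lemma:OLD}, produces an upper bound of the form
\begin{equation*}
D_1\exp(-\alpha\lambda/\no{f}_{\pbmo^{+}(3,1,A_3,\Omega)})\sum_{k \ge 0} 2^{-\alpha_1 k}\exp(\alpha C A_2 K k).
\end{equation*}

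The main obstacle is the tuning of this geometric series: convergence requires $\alpha C A_2 K < \alpha_1 \log 2$, forcing the John--Nirenberg exponent $\alpha$ to be of order $\alpha_1/(A_2 K)$, which is precisely the form of the exponent appearing in the statement; the $A_2$-factor in the denominator is a direct trace of the overlap size $1/A_2$ available at a single FIT link. A secondary technical difficulty lies in the initial cover: one needs $\{Q_i^{+,3}\}$ to cover $\Omega_{t_0}^{+}$ with bounded overlap while still respecting the chain-length budget of Definition \ref{defn:FITHolder}, which requires a Vitali-type argument applied to the cylinders furnished by the definition. Once these geometric ingredients and Lemma \ref{lemma:OLD} are in place, the rest is a routine adaptation of the stationary Smith--Stegenga/Buckley argument.
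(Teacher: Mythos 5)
Your proposal follows the same route as the paper's proof: a Vitali-type Whitney cover of $\Omega_{t_0}^+$ by cylinders near-realizing Definition \ref{defn:FITHolder}, the local parabolic John--Nirenberg lemma on each cylinder, a link-by-link telescoping of $\pbmo^+$-optimal constants along a FIT chain (your Chebyshev-on-the-overlap step is exactly the averaging in the paper's Proposition \ref{prop:basic-chain}, and both produce the gain $A_2$ per link), organization into dyadic scale generations, and summation using the outer layer decay of Lemma \ref{lemma:OLD}. The only structural difference is the final summation. The paper splits, for each cylinder $B$, the bad set into two disjoint events $\I_B$ and $\II_B$: the local oscillation exceeding $\lambda/2$ and the chain telescoping exceeding $\lambda/2$. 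Then $\sum\I_B$ is bounded by local JN with the \emph{unchanged} JN exponent, while $\sum\II_B$ is either $0$ or $|B|$ depending solely on whether the scale $2^j$ is below a threshold $2^{j_0}$ with $j_0 \sim -\lambda/(A_2DK)$, so the outer layer decay alone gives $\lesssim 2^{\alpha j_0}$. The final bound takes the minimum of the two resulting exponents. You instead fold both contributions into a single shifted level estimate and sum a geometric series. This is equivalent in outcome, but your heuristic that convergence ``forces the JN exponent to be of order $\alpha_1/(A_2K)$'' is slightly misleading: the local JN exponent is fixed, and what you actually must do is either (i) use $\min\{1, e^{-\alpha_{JN}x}\}\le e^{-\alpha' x}$ for any $\alpha'\le\alpha_{JN}$ and then choose $\alpha'$ small enough, or (ii) observe that the sum is really only over the finite range $k\lesssim\lambda/(A_2K\|f\|)$ and estimate separately the tail $k$ beyond that threshold, which is the paper's $\II_B$-term. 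Either repair is routine, but as written the geometric-series step is not quite airtight and needs one of these clarifications.
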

		
		Lemma \ref{lemma:global-jn} is proved in Section \ref{sec:global}.
		We remark that Lemma \ref{lemma:global-jn} has the following corollary,
		which also follows from the John--Nirenberg inequality in \cite{Aim88} (see also \cite{MR4657417} for a simpler proof) and the local-to-global result in \cite{Saa16}.
		However, instead of quoting it, 
		we can consider its local-to-global part as a corollary of our current proof.
		\begin{corollary}
			\label{cor:equivalences}
			Let $\Omega \subset \R^{n+1}$ be open and let $a > 1$, $q > 0$ and $\theta \ge 1$.
			If $f \in L^{\delta}_{loc}(\Omega)$ for some $\delta > 0$,
			then 
			\[
			\| f \|_{{\pbmo}^{+}(a,q,\theta,\Omega)} \sim_{a,q,\theta,n,p} \| f \|_{\pbmo^{+}(\Omega)}.
			\]
		\end{corollary}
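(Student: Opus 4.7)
My plan is to reduce the equivalence of all $\pbmo^{+}$-seminorms to applications of Lemma \ref{lemma:global-jn} on stretched cylinders $\widetilde{P}^{b}$, viewed themselves as small $(t_{0},(2,2,2))$-FIT H\"older domains. The key observation is that for any parabolic cylinder $P$ with $\widetilde{P}^{b} \subset \Omega$ and any $b > 1$, the sub-region $\widetilde{P}^{b}$ with $t_{0} := \pi_{T}(c(P))$ is itself a $(t_{0},(2,2,2))$-FIT H\"older domain, with H\"older constants depending only on $b$, $n$ and $p$. I would verify this by constructing an explicit FIT chain between small Whitney type cylinders inside $\widetilde{P}^{b}$ and counting the links via the upper bound of Lemma \ref{lemma:Barenblatt}; the explicit tubular geometry of $\widetilde{P}^{b}$ makes the required growth bound \eqref{eq:defn:FITHolder} elementary.

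Once this verification is in place, I would apply Lemma \ref{lemma:global-jn} to $f$ on $\widetilde{P}^{b}$ to obtain exponential integrability of $(f-c)_{+}$ on $(\widetilde{P}^{b})^{+}_{t_{0}} \supset P^{+,b}$, with decay rate governed by $\| f \|_{\pbmo^{+}(3,1,A_{3},\widetilde{P}^{b})}$, and a symmetric estimate for $(f-c)_{-}$ on $P^{-,b}$ by applying the same reasoning to $-f$ with time reversed. Choosing $b = a$, estimating $\| f \|_{\pbmo^{+}(3,1,A_{3},\widetilde{P}^{a})} \le \| f \|_{\pbmo^{+}(\Omega)}$ (by inclusion of the admissible family into that of $\Omega$), and converting exponential decay to $L^{q}$-moments will prove $\| f \|_{\pbmo^{+}(a,q,\theta,\Omega)} \lesssim \| f \|_{\pbmo^{+}(\Omega)}$. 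For the reverse inequality, I would cover any cylinder $P$ with $\widetilde{P}^{3} \subset \Omega$ by finitely many smaller cylinders $Q_{j}$ with $\theta \widetilde{Q}_{j}^{a} \subset \Omega$, connected by a uniformly bounded FIT chain inside $\widetilde{P}^{3}$, and telescope the oscillations along the chain. The assumption $f \in L^{\delta}_{loc}(\Omega)$ enters precisely to guarantee that the averages along these internal chains are finite and can be legitimately manipulated.

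The main obstacle I anticipate is the verification that $\widetilde{P}^{b}$ is a FIT H\"older domain with constants depending only on $b$, $n$ and $p$, together with the careful bookkeeping needed to ensure only the allowed seminorms appear in the final inequality. Because of the rigidity of parabolic Harnack chains highlighted in the introduction, this step has to be carried out by an explicit FIT chain construction inside $\widetilde{P}^{b}$ rather than imported from a stationary analogue.
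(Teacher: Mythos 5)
Your proposal is correct in outline and is consistent with the paper's own treatment, which does not spell out a proof but merely remarks that Corollary~\ref{cor:equivalences} follows from Lemma~\ref{lemma:global-jn} (or, alternatively, from the John--Nirenberg inequality of Aimar and the local-to-global result of Saari). You package this differently: rather than quoting the local parabolic John--Nirenberg inequality (Theorem~3.1 of the cited work) for the $q$-equivalence and then running the chaining (as in Proposition~\ref{prop:basic-chain}) for the $(a,\theta)$-equivalence separately, you invoke Lemma~\ref{lemma:global-jn} as a black box on $\widetilde{P}^{b}$ after verifying that $\widetilde{P}^{b}$ is itself a $(t_{0},A)$-FIT H\"older domain. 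This is a clean shortcut, and it works, but it carries a hidden price: you must check that the constants $D,\alpha$ in Lemma~\ref{lemma:global-jn} depend on $\Omega^+_{t_0}$ only through the FIT H\"older constants and the volume, so that a parabolic rescaling normalizing $r(P)=1$ renders them uniform over all $P$. The lighter route avoids this bookkeeping because the local John--Nirenberg constants are manifestly scale-invariant.

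Two further points deserve care. First, Lemma~\ref{lemma:global-jn} controls only the positive part $(f-c)_{+}$ on $(\widetilde{P}^{b})^{+}_{t_{0}}$, and you correctly propose to handle $(f-c)_{-}$ on $P^{-,b}$ by applying the same argument to $(x,t)\mapsto -f(x,-t)$. This is fine, but the two applications produce a priori two different constants $c_{+}$ and $c_{-}$, and one needs to observe that the PBMO condition on any intermediate cylinder forces $|c_{+}-c_{-}|\lesssim\|f\|_{\pbmo^+}$, so that a single $c$ works for both terms; this step is standard but should not be omitted. Second, when you verify that $\widetilde{P}^{b}$ is a FIT H\"older domain, the delicate region is the neighborhood of the set where the lateral boundary meets the temporal slice $\{t=t_0\}$, since the chains must simultaneously shrink in scale and stay in the future; a direct spiral-type construction does work here, and the upper bound of Lemma~\ref{lemma:Barenblatt} gives the required $\log(1/\dist)$ growth rate, but this is the part of your plan where all the content lives and where the argument would collapse if skipped.
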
 

        Once Lemma \ref{lemma:global-jn} and Corollary \ref{cor:equivalences} are known,
		the implication assuming a $t_0$-FIT condition in Theorem \ref{theorem:intro1} follows by a straightforward application of the Cavalieri principle.
        Indeed, we apply Lemma \ref{lemma:global-jn} with $\Omega_{s_{t_0}}^+$ in place of $\Omega$.
		The converse direction is handled in Section \ref{sec:universal},
		where the proof is concluded by Lemma \ref{lem:necessity}.
    \end{proof}

	\section{Proof of Lemma \ref{lemma:OLD}: Outer layer decay}
	\label{sec:old}
	
	The proof of the outer layer decay Lemma \ref{lemma:OLD} is divided into two parts.
	First,
	we show that a $(t_0,A)$-FIT H\"older domain is almost bounded in the sense that the parts far away must necessarily be very close to the boundary. 
	This is different from the behavior of the H-chain sets studied in \cite{Buck99} 
	as H-chain sets are bounded. See Section \ref{sect:examples:ex4} for an example of an unbounded FIT H\"older domain. We also show that parts far away from the boundary have to be close to the central cylinder $R_*.$
	\begin{lemma} 
		\label{lem:almost-bounded} 
		Let $A \in [2,\infty)^{3}$ and $t_0 \in \R$.
		Then, there exist constants $D_1,D_2 > 0$ such that the following holds.
		Assume that $\Omega$ is a $(t_0,A)$-FIT H\"older domain and $k\geq 1$.
		Then, 
		\begin{align}\label{eq:lem:almost-bounded1}
		\bigl\{ z \in \Omega_{t_0}^{+} : 2^{-k} \le  \dist_d(z,(\Omega_{t_0}^+)^c)  \bigr\} \subset B(c_{R_*},D_1 k^{D_2})
		\end{align}
		for the centre point $c_{R_*} \in R_*$. 
		Moreover, there exists an absolute constant $D_3>0$ so that
		\begin{align}\label{eq:lem:almost-bounded2}
			\bigl\{ z \in \Omega_{t_0}^{+} : 1 \leq  \dist_d(z,(\Omega_{t_0}^+)^c)  \bigr\} \subset B(c_{R_*},D_3).
		\end{align}
	\end{lemma}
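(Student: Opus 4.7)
My plan is to translate the logarithmic growth assumption \eqref{eq:defn:FITHolder} on the FIT-distance into a bound on the parabolic distance $d(z,c_{R_*})$ by means of the Barenblatt-type lower bound for chain lengths in free space-time (Lemma \ref{lemma:Barenblatt}).

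First, fix $z \in \Omega_{t_0}^{+}$ with $\dist_d(z,(\Omega_{t_0}^+)^c) \ge 2^{-k}$. The condition \eqref{eq:defn:FITHolder} furnishes a cylinder $Q \subset \Omega_{t_0}^{+}$ with $z \in Q^{+}$ such that
\[
k_{\Omega,A}(R_*,Q) \;\le\; K\log_2(1+2^k) \;\le\; 2K(k+1).
\]
Since a chain in $\mathcal{C}_{\Omega,A}(R_*,Q)$ is a fortiori a $\fit_{(A_1,A_2)}$ chain in $\R^{n+1}$, the lower bound in Lemma \ref{lemma:Barenblatt}, applied with $R_1 = R_*$ and $(y_0,s_0) \in Q$, gives
\[
2K(k+1) \;\gtrsim\; \log\frac{s_0-\tau_* + 2r_*^p}{r_*^p} \;+\; \biggl(\frac{|y_0-x_*|^p}{s_0-\tau_*+2r_*^p}\biggr)^{1/(p-1)},
\]
where $(x_*,\tau_*)$ and $r_*$ denote the center and radius of $R_*$. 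Separating the two nonnegative summands yields quantitative upper bounds on $|s_0-\tau_*|$ and then on $|y_0-x_*|$ in terms of $k$.

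It remains to relate this information about $Q$ back to $z$ itself. Successive links in a $\fit_{(A_1,A_2)}$ chain have volumes within a factor $A_1$, so after $\le 2K(k+1)$ links the radius satisfies $r(Q) \le r_* A_1^{2K(k+1)/(n+p)}$. The membership $z \in Q^+ = Q + 3r(Q)^p$ then forces $d(z,c_Q) \lesssim r(Q)$, whereupon the triangle inequality
\[
d(z,c_{R_*}) \;\le\; d(z,c_Q) + d(c_Q,c_{R_*})
\]
produces the containment \eqref{eq:lem:almost-bounded1} with $D_1$, $D_2$ determined by $K$, $A$, $r_*$, $n$, $p$. The second statement \eqref{eq:lem:almost-bounded2} corresponds to $k=0$: then $k_{\Omega,A}(R_*,Q) \le K$, so each of the bounds above reduces to a constant.

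The main technical issue is the three-way interplay between the spatial displacement from $x_*$, the temporal displacement from $\tau_*$, and the possibly growing scale $r(Q)$ of the terminal cylinder itself. The Barenblatt lower bound controls the first two, whereas the volume-comparability of consecutive links in a FIT chain is what controls $r(Q)$; the geometric membership $z \in Q^+$ is what ties these three pieces of information to a single estimate on $d(z,c_{R_*})$.
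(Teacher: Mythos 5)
Your proposal identifies the right tools (the growth bound \eqref{eq:defn:FITHolder} and the Barenblatt lower bound, Lemma \ref{lemma:Barenblatt}), and your reductions are locally correct, but the approach cannot yield the claimed \emph{polynomial} containment $B(c_{R_*},D_1 k^{D_2})$: it only gives an exponential one, which is a genuine gap.

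To see this, normalize $c_{R_*}=0$, $r_*=1$, write $u=\pi_T(z)$ and $v=|\pi_S(z)|$, and recall that $d(z,c_{R_*})\sim\max(v,u^{1/p})$. From $\#C\lesssim k$ and the Barenblatt lower bound you get
\[
k \;\gtrsim\; \log(u+2) + \Bigl(\tfrac{v^{p}}{u+2}\Bigr)^{\frac{1}{p-1}}.
\]
Separating the terms gives $u\lesssim e^{Ck}$ and then $v\lesssim e^{Ck/p}k^{(p-1)/p}$, so the $d$-distance is only controlled by $e^{Ck}$. The same happens with the scale of $Q$: your estimate $r(Q)\le r_*A_1^{2K(k+1)/(n+p)}$ is again exponential in $k$. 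An exponential bound is not merely cosmetically worse: in the subsequent Lemma \ref{lemma:OLD} one must win the race $e^{-ak/D_2}\cdot(\text{radius})^{n+p}\lesssim 2^{-\alpha k}$, which holds automatically only if the radius is polynomial in $k$.

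The missing idea is a bootstrap. The paper first applies Barenblatt once to show that every $z$ with $d(z,c_{R_*})\ge 1$ has $\dist_d(z,(\Omega_{t_0}^+)^c)\lesssim 1$. Consequently, cylinders in any admissible chain that are far from the origin (or near the origin, since they are tied to $R_*$) must have uniformly bounded radius. For $z$ at $d$-distance $\sim 2^j$ from the origin, a chain of uniformly sized cylinders must therefore have $\#C\gtrsim 2^j$, which is a \emph{linear} (in distance) lower bound rather than the logarithmic lower bound coming directly from Barenblatt. Combining $2^j\lesssim\#C\lesssim k$ gives $d(z,c_{R_*})\lesssim k$, the required polynomial bound. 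You would need to incorporate a step of this kind before the triangle inequality at the end of your argument.

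As a smaller remark, \eqref{eq:lem:almost-bounded2} is not literally the $k=0$ case of \eqref{eq:lem:almost-bounded1} (which is stated only for $k\ge 1$); in the paper it is read off directly from the first application of the Barenblatt inequality, and the constant $D_3$ there still depends on the data $(n,p,A,K,r_{R_*})$, so "absolute" should be understood in that sense.
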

	\begin{proof}
		Let $R_*$ be the central cylinder and without loss of generality assume that $c_{R_*} = 0.$ 
		If $B_d(0,1)^{c} \cap \Omega=\emptyset$,
		the claims hold trivially.
		Assume $z \in B_d(0,1)^{c} \cap \Omega_{t_0}^+$. 
		Let $R^+ \ni z$ be a parabolic cylinder and         
        $C \in \mathcal{C}_{\Omega,A}(R_{*},R)$ such that 
        \[
        \#C \sim \inf_{ z\in Q^+, Q\subset  \Omega^+_{t_0}}
        k_{\Omega,A}(R_{*},Q).
        \]
		By $|z|\gtrsim 1,$ Lemma \ref{lemma:Barenblatt} and inequality \eqref{eq:defn:FITHolder},
		we have 
        \begin{align}\label{eq:lem:almost-bounded3}
			1\lesssim
            \log \frac{\pi_T (z)+2r_{R_*}^{p}}{r_{R_*}^{p}} + \biggl(\frac{|\pi_S (z)|^{p}}{\pi_T (z) + 2r_{R_*}^{p}} \biggr)^{\frac{1}{p-1}}
            \lesssim \# C \lesssim \log \biggl(1 + \frac{1}{\dist_{d}(z,(\Omega_{t_0}^+)^c)} \biggr),
		\end{align}
		so that   $\dist_{d}(z,(\Omega_{t_0}^+)^c)\lesssim  1.$
		We conclude from \eqref{eq:lem:almost-bounded3} that if $1 \leq  \dist_d (z,(\Omega_{t_0}^+)^c)$, then $d(z,0) \lesssim 1$ and hence we obtain \eqref{eq:lem:almost-bounded2}.
		
		The bound \eqref{eq:lem:almost-bounded3} tells us that every point far from the origin must be close to the boundary. 
		This implies that no parabolic cylinder admissible in a FIT chain far away can be large, and neither can they be large near the origin.
		Quantitatively,
		when $z \in [B(0,2^{j+1}) \setminus B(0,2^{j})] \cap \Omega_{t_0}^{+}$, for $j \ge 1$, 
		we write a crude estimate for number of uniformly sized cylinders needed to connect an annulus to its center
		\[
		\# C \gtrsim  \dist_d([B(0,2^{j+1}) \setminus B(0,2^{j})],0) \sim 2^{j}.
		\]
		Then, write the upper bound from the $(t_0,A)$-FIT H\"older hypothesis for the almost optimal chain 
		\[
		\# C \lesssim  \frac{1}{\dist_{d}(z,(\Omega_{t_0}^+)^c)}
		\]
		so that altogether
		\[ 
		d(z,0) \sim 2^{j} \lesssim \# C \lesssim \log\biggl( \frac{1}{\dist_{d}(z,(\Omega_{t_0}^+)^c)}\biggr).
		\]
		Now if $2^{-k} \le  \dist_d(z,(\Omega_{t_0}^+)^c)$
		we see that 
		\[
		d(z,0) \le k^{D_2},
		\]
		as was claimed. 
	\end{proof}

	Second,
	we show that there is a macroscopic top scale such that below that scale we have enough interior corkscrews on average, provided we are far enough in $\Omega_{t_0}^+$ in the time variable relative to the scale under scrutiny. 
	Informally, this argument is similar to that in \cite{Buck99}, but due to FIT chains exiting $\Omega_{t_0}^+$ an additional argument has to be made to cover the cases where the chain begins from near the set $\{t=t_0\}$. 
	For the following lemma, consider a fixed constant $M>1$ and denote 
	\[
	L_k^+ := \bigl\{ z \in \Omega_{t_0}^{+} :   2^{-(k+1)} < \dist_{d}(z,(\Omega_{t_0}^+)^c) \le 2^{-k}  \bigr\} \cap \bigl\{ (x,t) : t > t_0 + 2^{-pk/M} \bigr\}.
	\]
    Notice that if $z\in L_k^+$, then $\dist_p(z,\{t\leq t_0\}) > 2^{-k/M}.$
	We fix $M=2$ but preserve the variable to make it easier to track in the following proofs.
	\begin{lemma} 
		\label{lem:MeanCorkscrews} 
		Let $A \in [2,\infty)^{3}$, $t_0 \in \R$ and $\Omega$ be a $(t_0,A)$-FIT H\"older domain.  
		Then, there exist constants $D_1,D_2 > 0$ such that: for each integer $k\geq 1$, there exists a family of parabolic cylinders $\mathcal{F}(k)$ such that 
		\[
		\bigcup_{P\in\mathcal{F}(k)}P \subset \Omega_{t_0}^+,\qquad  \sum_{P \in \mathcal{F}(k)} 1_{P} \le 1, \qquad  \sum_{P \in \mathcal{F}(k)}  D_{2} 1_{D_1 P}  \ge k1_{L_k^+}.
		\]
	\end{lemma}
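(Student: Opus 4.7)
The plan is to harvest cylinders at every dyadic scale along near-optimal FIT chains from $R_*$ toward each $z \in L_k^+$, and then thin them into a pairwise disjoint family.

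For each $z \in L_k^+$ I would use the $(t_0,A)$-FIT H\"older condition \eqref{eq:defn:FITHolder} to obtain a chain $C(z) = (P_1^z,\ldots,P_{N(z)}^z) \in \mathcal{C}_{\Omega,A}(R_*,Q(z))$ with $z \in Q(z)^+ \subset \Omega_{t_0}^+$ and $N(z) \lesssim K k$, since $\dist_d(z,(\Omega_{t_0}^+)^c) \sim 2^{-k}$. The FIT link condition $|P|/|Q| \in [1/A_1, A_1]$ keeps the radii changing by at most a factor $A_1^{1/(n+p)}$ per step, and passing from $r(R_*) \sim 1$ down to $r(Q(z)) \lesssim 2^{-k}$ forces $N(z) \gtrsim k$. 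Consequently the chain traverses every dyadic radius window $(2^{-j-1},2^{-j}]$ for $j = 0, 1, \ldots, k-1$, and I would pick one representative $P_j(z) \in C(z)$ in each window.

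The next step is to establish the proximity $d(z, c(P_j(z))) \lesssim 2^{-j}$. The relation $|P^+ \cap Q^-| \gtrsim |P \cup Q|$ across any FIT link translates, through the observation that the shifts by $\pm 3r(\cdot)^p$ are parabolic displacements of size $\sim r(\cdot)$, into $d(c(P), c(Q)) \lesssim r(P) + r(Q)$. Summing along the tail of $C(z)$ from $P_j(z)$ onward and grouping by dyadic scale, and choosing $C(z)$ balanced enough that each dyadic scale is visited $O(K)$ times (possible because $N(z) \sim k$), the radii telescope as a geometric series of size $\lesssim 2^{-j}$. Combined with $d(z, c(Q(z))) \lesssim r(Q(z)) \le 2^{-k}$, this yields $z \in D_1 P_j(z)$ for a universal constant $D_1 = D_1(n, p, A, K)$.

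The family $\mathcal{F}(k)$ would then be assembled by a Vitali thinning at each scale. For fixed $j$, the collection $\{P_j(z) : z \in L_k^+\}$ consists of parabolic cylinders of radius $\sim 2^{-j}$ inside $\Omega_{t_0}^+$, from which a pairwise disjoint subfamily $\mathcal{F}_j(k)$ is extracted such that every $P_j(z)$ is contained in a bounded dilate of some element of $\mathcal{F}_j(k)$. Setting $\mathcal{F}(k) := \bigcup_{j=0}^{k-1} \mathcal{F}_j(k)$, the multiplicity lower bound $\sum_{P \in \mathcal{F}(k)} 1_{D_1 P} \gtrsim k \cdot 1_{L_k^+}$ follows at once from the proximity step, as does the containment $\bigcup \mathcal{F}(k) \subset \Omega_{t_0}^+$.

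The hard part will be the pairwise disjointness of $\mathcal{F}(k)$ \emph{across} different scales $j$, which is the genuine parabolic difficulty that has no direct counterpart in the stationary argument of \cite{Buck99}, since Whitney-type cylinders at different scales naturally nest. The forward-in-time geometry rescues the situation: inside a single FIT chain, $P_j(z)$ sits in a temporal slab of thickness $\sim 2^{-jp}$ located at a time offset $\sim 2^{-jp}$ behind $\pi_T(z)$, so cylinders of distinct dyadic scales separate naturally in time. The defining constraint $t > t_0 + 2^{-pk/M}$ of $L_k^+$ is needed exactly here: it ensures that even the coarsest tail cylinder stays strictly above the time floor $\{t = t_0\}$, so the whole tail remains inside $\Omega_{t_0}^+$ and the construction is legitimate. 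By tuning the Vitali selection at each scale so that its representatives sit in well-separated time slabs, one secures the cross-scale disjointness; any residual bounded overlap is absorbed into the constant $D_2$.
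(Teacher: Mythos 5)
There is a genuine gap in the step where you claim that the near-optimal chain $C(z)$ can be ``chosen balanced enough that each dyadic scale is visited $O(K)$ times.'' The $(t_0,A)$-FIT H\"older hypothesis only bounds the \emph{total} length $N(z) \lesssim Kk$; the distribution of radii along the chain is dictated by the geometry of $\Omega$ and cannot be freely rebalanced. Nothing prevents a single dyadic scale from being revisited $\sim\sqrt{k}$ times while other scales are hit only once, and then your geometric series for $d(z,c(P_j(z)))$ does not telescope to $\lesssim 2^{-j}$ -- the tail of the chain past $P_j(z)$ may wander a distance up to $(\text{tail length})\cdot 2^{-j}\sim Kk\cdot 2^{-j}$. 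In short, a representative at scale $2^{-j}$ need not lie anywhere near $z$, so the proximity $z\in D_1P_j(z)$, which is the load-bearing claim in your construction, is not established. There is a second, milder issue of the same flavor: your range of scales $j=0,\dots,k-1$ reaches all the way up to macroscopic size $\sim 1$, but cylinders at scale $\gtrsim 2^{-k/M}$ can fall below $\{t = t_0\}$ or leave $\Omega$ entirely, breaking the required containment $\bigcup_{P\in\mathcal F(k)}P\subset\Omega^+_{t_0}$; the time floor in $L_k^+$ only protects a ball of radius $\sim 2^{-k/M}$ around $z$, not one of radius $\sim 1$.

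The paper's proof dodges this by flipping the parametrization: instead of selecting one chain cylinder per dyadic \emph{radius}, it selects one per geometrically growing \emph{annulus} $B_j\setminus B_{j-1}$ centered at $z$, with growth factor $(8+7A_1)$, and restricts to $j\le j_0$ where $(8+7A_1)^{j_0}r_N\lesssim 2^{-k/M}$. Proximity to $z$ is then automatic, and the place where pigeonhole is used is entirely different: every such annulus must contain a chain center (since consecutive centers can only jump by a factor $8+7A_1$ in distance), while the total budget $N\lesssim Kk$ forces a positive fraction of the $j_0\gtrsim k$ annuli to contain fewer than $m_0\sim K$ centers; in those annuli, summing the (boundedly many) radii must still cover the annulus width, so at least one cylinder has radius comparable to $(8+7A_1)^j r_N$ -- that is the corkscrew. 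Finally, the cross-scale disjointness you flag as the ``hard part'' is not actually an obstacle: the paper applies the five covering lemma once to the entire family $\{P_{z,j}: z\in L_k^+, 0\le j\le j_0-1\}$ across all scales and all base points simultaneously, and pairwise disjointness is exactly what Vitali provides; no temporal-slab bookkeeping is needed.
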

	\begin{proof} 
		By Lemma \ref{lem:almost-bounded} it is enough to prove the claim for $k\geq k_0$, for some absolute $k_0\in \mathbb{Z}$ that is allowed to depend on the constants in the definition of the fixed $t_0$-FIT H\"older domain. Indeed, if $k\leq k_0$, then we take any fixed cylinder $P\subset \Omega_{t_0}^+$ and by Lemma \ref{lem:almost-bounded} there exists some absolute constant $\sigma>0$ so that $L_k \subset \sigma P$ for all $k\geq k_0$. In this case we set $D_1 = \sigma$ and $D_2 = k_0$ and $\mathcal{F}(k)= \{P\}$.
		In what follows we will quantify $k_0$
        providing the desired family $\mathcal{F}(k)$ for each $k \geq k_0$ and with some constants $D_1,D_2$ not depending on $k$. 
		
		Let $R_{*}$ be the  central cylinder provided by the $(t_0,A)$-FIT H\"older hypothesis.
		Pick a point $z \in L_k^+$, 
		and let $P_N$ be a parabolic cylinder with $z \in P_N^{+}$ and such that $C_z := \{P_{i}\}_{i=1}^{N} \in \mathcal{C}_{\Omega,A}(R_{*},P_N)$ is a minimal chain satisfying 
		\begin{equation}
			\label{eq:corkproof1}
			N \le 2 K \log_{2}\biggl(1+ \frac{1}{\dist_d(z,(\Omega^+_{t_0})^{c})} \biggr) \le 4 K k.
		\end{equation}
		This is possible by Definition \ref{defn:FITHolder}. 
		
		\textit{Step 1.} 
		 Denote $c_i := c(P_i)$ and $r_i := r(P_i)$, for $i=1,\dots ,N.$
		We show that annuli centred at $z$ and growing with a sufficiently large absolute geometric factor (depending only on the $t_0$-FIT H\"older assumption) and not containing the central cylinder $R_*$ contain at least a single centre point $c_i$. 
		
		The chain $C_z$ is minimal so $z \notin P_{i}$,
		and as the chain is $\fit_{A_1,A_2}$,
		we have for all $i \in [2,N]$ that
		\begin{align}\label{eq:corks1}
			d(c_i, c_{i-1}) \le 7r_i + 7r_{i-1} \le 7 (1+ A_1) r_i \le 7 (1+ A_1) d(c_i,z)
		\end{align}
		and further that
		\begin{align}\label{eq:corks2}
			d(c_{i-1}, z) \le d(c_{i-1},c_{i}) + d(c_{i},z) \le (8+7A_1) d(c_i,z).
		\end{align}
		Denote $B_j = B_d(z, (8+7A_1)^{j} r_{N})$. We want to find a centre point in the annulus $ B_{j}\setminus B_{j-1}$.
		
		Let $j \ge 2$ be such that $(8+7A_1)^{j} r_N \le d(c_1,z)$ and $i \geq 2$ be the smallest integer such that $c_{i} \in B_{j-1}.$ 
		We want to apply the bound \eqref{eq:corks2} $j$-many times. Notice that we can do this by \eqref{eq:corks1} together with $c_{i-i} \notin B_{j-1},$ which follows from the minimality of $i$.
		Then, applying the bound  \eqref{eq:corks2} repeatedly, we obtain 
		\[
		d(c_{i-1},z) \le (8+7A_1) d(c_{i},z) \le \cdots \le (8+7A_1)^{j}r_N
		\]
		so that $c_{i-1} \in B_{j}\setminus B_{j-1}.$ 
		In the case $j=1$ note that $c_N\in B_1\setminus B_0$.
		We have now shown that
		\begin{equation*}
			\mathcal{A}_j^{z} := \#  (\{c_i: 1 \le i \le N \} \cap (B_j  \setminus B_{j-1}) )
		\end{equation*}
	satisfies
		\begin{equation}
			\label{eq:corkproof2-no-empty-annuli} 
			\mathcal{A}_j^{z} > 0,\text{ for all $j \ge 1$ with $(8+7A_1)^{j} r_N \le d(c_1,z)$.}
		\end{equation}
		
		\textit{Step 2.} 
		We show that  a large portion of all the balls $B_j$ do not intersect the set $\{(x,t) : t \leq t_0\}$ and that the corresponding numbers $\mathcal{A}_j^z$ (with the same $j$) have a uniform upper bound.

        Since $P_N\subset \Omega_{t_0}^+$ and $A_3\widetilde{P}_N^3 \subset \Omega$ there holds that $r_N \leq 2^{-k+1}.$  Now if  $2^{-k/M}> 2^{-k+1}(8+7A_1)$ holds, which is equivalent with 
		\[
		k \geq   \frac{1}{1-1/M}\log_2(2(8+7A_1))
		\] 
		we may choose an integer $j_0\geq 1$ so that
		\begin{align}\label{eq:corks10}
			r_N (8+7A_1)^{j_0} \le 2^{-k/M} <  r_N (8+7A_1)^{j_0+1}.
		\end{align}
		By $r_N \leq 2^{-k+1}$ 
		and defining 
		\begin{align*}
			k_0 := 2\frac{\log_2(8+7A_1)-1}{1-1/M} > \frac{1}{1-1/M}\log_2(2(8+7A_1)),
		\end{align*} 
		a straightforward computation using the upper bound in \eqref{eq:corks10} shows that 
		\begin{align*}
			j_0 >
			\frac{k}{2\log_{2}(8+7A_1)},\qquad k \geq k_0.
		\end{align*}
		This is a good lower bound for $j_0.$
		For any integer $m \ge 1$,
		we write 
		\[
		j_0 = \# \{1 \le j \le j_0 : \mathcal{A}_j^{z} \ge m \} +\# \{1 \le j \le j_0 : \mathcal{A}_j^{z} < m \},
		\]
		where $\#$ denotes the cardinality of the set.
		By \eqref{eq:corkproof1} we have 
		\[
		m  \# \{1 \le j \le j_0 : \mathcal{A}_j^{z} \ge m \}  \le N \le 4 K k.  
		\]
		Define
		\[
		m_0 :=  4\log_{2}(8+7A_1) K.
		\]
		We have for $k\geq k_0$
        that
		\begin{align*}
			j_0 \leq  \frac{k}{4\log_{2}(8+7A_1)} +  \# \{1 \le j \le j_0 : \mathcal{A}_j^{z} < m_0 \}.
		\end{align*}
		Thus, after an absorption, we obtain 
		\begin{equation*}\label{eq:corkproof3:by-good-annuli}
			\frac{k}{4\log_{2}(8+7A_1)}  \leq \# \{1 \le j \le j_0 : \mathcal{A}_j^{z} < m_0 \},\qquad k \geq k_0.
		\end{equation*}
		Notice that $z\in L_k^+$ means that $\dist_d(z,\{t\le t_0\})>2^{-k/M}$ which together with the definition of $j_0$ automatically guarantees that $B_j\cap \{(x,t) : t \leq t_0\} = \emptyset$ for all $j=1,\dots, j_0.$ 
		Thus, in fact 
		\begin{equation}\label{eq:corks11}
			\begin{split}
				\frac{k}{4\log_{2}(8+7A_1)}  \leq \# \{1 \le j \le j_0 : B_j \cap \{t \le t_0\}=\emptyset \text{ and } \mathcal{A}_j^{z} < m_0 \}, \quad k \geq k_0.
			\end{split}
		\end{equation}
		
		\textit{Step 3.} We show that each annulus $B_{j}\setminus B_{j-1}$  with $0<\mathcal{A}_j^{z} < m_0$ admits a corkscrew.
		We have 
		\begin{multline*}
			\frac{1}{2}(8+7A_1)^j  r_{N}
			\leq \dist_d(B_{j-1},  B_{j}^{c})
			\le  14(1+A_1)\sum_{c_i \in B_{j}\setminus B_{j-1}} r_i < 14(1+A_1) (\max_{c_i \in B_{j}\setminus B_{j-1}}  r_i) m_0,
		\end{multline*}
		so there exists $c_{i_j}\in B_{j}\setminus B_{j-1}$ such that
		\begin{align}\label{eq:corks12}
			r_{i_j} \ge \frac{r_{N}(8+7A_1)^{j} }{28m_0(1+A_1)} =: \tilde{r}_j.
		\end{align}
		
		\textit{Step 4.} Now we are ready to construct the family $\mathcal{F}(k)$. Set 
		\[
		P_{z,j} = \begin{cases}
			B_d(c_{i_j},\tilde{r}_j) , \quad 0<\mathcal{A}_j^{z} < m_0, \\
			\varnothing, \quad {\rm otherwise}.
		\end{cases}
		\]
        Notice that if $j \leq j_0 - 1$ and $0<\mathcal{A}_j^{z} < m_0$, then by $c_{i_j}\in B_{j}$ and $B_{j+1} \subset \{t > t_0\}$ we have 
        \begin{align*}
            \dist_{d}(P_{z,j}, \{t\leq t_0\}) &\geq \dist_{d}(P_{z,j}, B_{j+1}^c) \geq r(B_{j+1}) - r(B_{j}) - \tilde{r}_j \\
            &= r_N(8+7A_1)^{j+1}\left( 1 - \frac{1}{(8+7A_1)} - \frac{1}{28m_0(1+A_1)(8+7A_1)}    \right) > 0.
        \end{align*}
        So there holds that $P_{z,j} \subset \Omega_{t_0}^+$ for each $j=1,\dots, j_0-1.$
        Now set 
        \begin{align*}
			D_1 := 28m_0(1+A_1), \qquad
			D_2 :=  1 + 4 \log_{2}(8+7A_1).
		\end{align*}
		By \eqref{eq:corks12} for $P_{z,j}$ with $0<\mathcal{A}_j^{z} < m_0$ there holds that $D_1P_{z,j}  \ni z.$
		Thus, by \eqref{eq:corks11}, it follows that
		\begin{multline*}
		    k \leq (D_{2}-1)\# \left( \{1 \le j \le j_0  : B_j \cap \{t=t_0\}=\emptyset \text{ and } \mathcal{A}_j^{z} < m_0 \} \right)
		  \\ 
        \leq D_{2} \# \left( \{1 \le j \le j_0 -1 : B_j \cap \{t=t_0\}=\emptyset \text{ and } \mathcal{A}_j^{z} < m_0 \} \right)
		\leq  \sum_{\substack{1 \le j \le j_0 -1 : \\ \mathcal{A}_j^{z} < m_0}} 1_{D_1 P_{z,j}}.
		\end{multline*}
		Applying the five covering lemma (Lemma 1.7 in \cite{MR2867756}) to the family
		\[
		\{ P_{z,j}  :   z \in L_k^+ , \ 0 \le j \le j_0 - 1\},
		\]
		we obtain a pairwise disjoint family $\mathcal{F}(k)$ with the desired properties. 
	\end{proof}
	
	Now we are in a position to conclude the proof of Lemma \ref{lemma:OLD}.
	
	\begin{proof}[Proof of Lemma \ref{lemma:OLD}]
		Recall that the non-centred Hardy--Littlewood maximal function
		\[
		Mf(z) = \sup_{ w \in \R^{n+1}, r > 0 }  \frac{1_{B_d(w,r)}(z)}{|B_d(w,r)|} \int_{B_d(w,r)} |f(z')| \, dz'
		\]
		satisfies 
		\begin{equation}
			\label{eq:hl-bound}
			\no{M}_{L^{q}(\R^{n+1}) \to L^{q}(\R^{n+1})} \leq Cq'
		\end{equation}
		for some absolute $C>0$,
		whenever $q \in (1,\infty]$ and $q' = q/(q-1)$ (and $\infty' = 1$).
		Thus,
		for any $D > 1$ and $q \in [1,\infty)$ and any disjoint family of parabolic cylinders $\mathcal{F}$,
		we have the well-known bound
		\begin{equation*}
			\label{eq:OLD-dilates}
			\begin{split}
				\Big\|\sum_{B \in \mathcal{F}} 1_{DB}\Big\|_{L^{q}(\R^{n+1})}\
				&= \sup_{\no{g}_{L^{q'}(\R^{n+1})\le 1}} \sum_{B \in \mathcal{F}} \int_{DB} g(z) \, dz \\ 
				&\le \sup_{\no{g}_{L^{q'}(\R^{n+1})\le 1}} \sum_{B \in \mathcal{F}} D^{n+p}|B| \inf_{z \in B} Mg(z) \\
				&\le D^{n+p} \Big\|\sum_{B \in \mathcal{F}} 1_{B}\Big\|_{L^{q}(\R^{n+1})} \sup_{\no{g}_{L^{q'}(\R^{n+1})\le 1}} \nos{Mg }_{L^{q'}(\R^{n+1})} \\
				& \leq CqD^{n+p} \Big\|\sum_{B \in \mathcal{F}} 1_{B}\Big\|_{L^{q}(\R^{n+1})} \leq CqD^{n+p}\Big|\bigcup \mathcal{F}\Big|^{1/q},
			\end{split}
		\end{equation*}
		where the penultimate bound used \eqref{eq:hl-bound} and the last the pairwise disjointness of the collection $\mathcal{F}.$
		For $a > 0$, we use the above to estimate
		\begin{equation*}
			\begin{split}
				\int_{\bigcup_{B \in \mathcal{F}} DB}  \exp\big(a \sum_{B \in \mathcal{F}} 1_{DB}(z) \big)  \, dz
				\le \sum_{j=0}^{\infty} \frac{a^{j}}{j!} \int  \Big( \sum_{B \in \mathcal{F}} 1_{DB}(z)  \Big)^{j} \, dz \lesssim  \Big|\bigcup \mathcal{F}\Big|   \sum_{j=0}^{\infty} \frac{(CD^{n+p} a j )^{j}}{j!} .
			\end{split}
		\end{equation*}
		Such a series converges provided that $a < 1/(CD^{n+p}e)$.
		
		Let $D_3 = D_1$ and $D_4 = D_2$, where  $D_1,D_2$ are as in the statement of Lemma \ref{lem:almost-bounded}.
		Applying the above bound with $\mathcal{F} = \mathcal{F}(k)$ and $D = D_1$ from Lemma \ref{lem:MeanCorkscrews} and $a = 1 / (2CD_1^{n+p}e)$
		we obtain
		\begin{align*}
			e^{a k/D_2}|L_k^+|  \le \int_{\bigcup_{B \in \mathcal{F}(k)} D_1B}  \exp\Big(a \sum_{B \in \mathcal{F}(k)} 1_{D_1B}(z) \Big)  \, dz 
			\lesssim \Bigl|\bigcup \mathcal{F}(k)\Bigr| \lesssim |\Omega_{t_0}^+ \cap B(c_{R_*},D_3k^{D_4})|.
		\end{align*}
		Rearranging gives 
		\begin{align*}
			|L_k^+| \lesssim   e^{-a k/D_2}|\Omega_{t_0}^+ \cap B(c_{R_*},D_3k^{D_4})|.
		\end{align*}
		Bounding the complement is more straightforward,
				\begin{align*}
			|L_k\setminus L_k^+| \leq 2^{-pk/M}|B_{\mathbb{R}^n}(z_0,D_3k^{D_4})|.
		\end{align*}
		Combining these estimates we obtain for some $\alpha>0$ that 
		\begin{align*}
			|L_k|  =	|L_k^+| + 	|L_k\setminus L_k^+| \lesssim \big(e^{-a k/D_2}+2^{-pk/M}\big)\big(D_3k^{D_4}\big)^{n+p} \lesssim 2^{-\alpha k},
		\end{align*}
		which finishes the proof of \eqref{eq:OLD:1}.
		Then we check \eqref{eq:OLD:2}.
		By Lemma \ref{lem:almost-bounded}  there holds for some $D_3>0$ that 
		\begin{align*}
			\Omega_{t_0}^+ \subset B(c_{R_*}, D_3) \cup \bigcup_{k \geq 1} L_k.
		\end{align*}
		Now, using \eqref{eq:OLD:1}, we bound
		\begin{align*}
			\big|\Omega_{t_0}^+\big| \lesssim D_3^{n+p} + \sum_{k\geq 1}2^{-\alpha k} \lesssim 1.
		\end{align*}
        This finishes the proof.
	\end{proof}

	\section{Proof of Lemma \ref{lemma:global-jn}: Level set estimate}
	\label{sec:global}
	
	We start the proof by an estimate that controls a $\pbmo^{+}$ function along a FIT chain.
	\begin{proposition}  
		\label{prop:basic-chain}
		Let $A = (A_1,A_2) \in [2,\infty)^{2}$ and $D \ge 1$.
		Let $C$ be a $\fit_{A}$ chain connecting the parabolic cylinder $P$ to the parabolic cylinder $Q$ and using the notation from the line  \eqref{gdef:lags} we denote 
		$$
		C_{P,Q} = \bigcup_{R \in C}  \widetilde{R}^{3},\qquad\widetilde{R}^{3} := \conv (P^{+,3} \cup P^{-,3}).
		$$ 
		For $R\in C$, 
		let $a_{R}$ be a constant with 
		\[
		\fint_{R^{+}} (f- a_{R})_{+} + \fint_{R^{-}} (f- a_{R})_{-} \le D \no{f}_{\op{PBMO}^+(C_{P,Q})}.
		\]
		Then, there holds that 
		\begin{align*}
			(a_Q - a_P)_+ \le A_2 D ( \# C )  \| f \|_{\op{PBMO}^+(C_{P,Q})}. 
		\end{align*}
	\end{proposition}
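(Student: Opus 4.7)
The plan is to telescope across the chain and reduce to a per-link bound, exploiting the overlap region that defines a FIT link. Writing $C = (R_j)_{j=1}^{N}$ with $R_1 = P$ and $R_N = Q$, telescoping gives
\[
(a_Q - a_P)_+ \le \sum_{j=1}^{N-1} (a_{R_{j+1}} - a_{R_j})_+,
\]
so it suffices to prove that each single-link contribution is at most $A_2 D \no{f}_{\op{PBMO}^+(C_{P,Q})}$. The idea for the per-link estimate is that the overlap $R_j^+ \cap R_{j+1}^-$ is a bridging region on which both $(f - a_{R_j})_+$ and $(f - a_{R_{j+1}})_-$ are simultaneously controlled.

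To execute this, for a single FIT link $(R_j, R_{j+1})$ I would start from the pointwise inequality
\[
(a_{R_{j+1}} - a_{R_j})_+ \le (f(z) - a_{R_j})_+ + (a_{R_{j+1}} - f(z))_+ = (f(z) - a_{R_j})_+ + (f(z) - a_{R_{j+1}})_-,
\]
valid for every $z$. Averaging $z$ over $R_j^+ \cap R_{j+1}^-$ and then extending each integral to its parent cylinder $R_j^+$ or $R_{j+1}^-$ respectively (which are time translates of $R_j$ and $R_{j+1}$, hence have the same Lebesgue measure as them) yields
\[
(a_{R_{j+1}} - a_{R_j})_+ \le \frac{1}{|R_j^+ \cap R_{j+1}^-|}\Bigl[|R_j|\fint_{R_j^+}(f - a_{R_j})_+ + |R_{j+1}|\fint_{R_{j+1}^-}(f - a_{R_{j+1}})_-\Bigr].
\]

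To finish, I would bound both $|R_j|$ and $|R_{j+1}|$ by $|R_j \cup R_{j+1}|$ and invoke the FIT link overlap property $|R_j^+ \cap R_{j+1}^-| \ge |R_j \cup R_{j+1}|/A_2$, so that the prefactor is at most $A_2$. The bracketed quantity is then exactly the sum controlled by the hypothesis on $a_{R_j}$ and $a_{R_{j+1}}$, where we note that the inclusions $\widetilde{R_j}^{3}, \widetilde{R_{j+1}}^{3} \subset C_{P,Q}$ ensure that the $\op{PBMO}^+(C_{P,Q})$ seminorm is available at these cylinders. This yields the per-link bound $(a_{R_{j+1}} - a_{R_j})_+ \le A_2 D \no{f}_{\op{PBMO}^+(C_{P,Q})}$, and summing over the $N - 1 \le \#C$ links concludes the argument. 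There is no real obstacle here; the whole proof is a clean telescoping, and the only point needing care is extending each of the two integrals to its parent cylinder \emph{before} dividing by the overlap size, which is what produces the single factor $A_2$ rather than $2A_2$.
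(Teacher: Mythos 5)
Your argument follows the paper's proof essentially step for step: telescope $(a_Q-a_P)_+$ across the chain, control each increment by averaging over the overlap $V_j = R_j^+ \cap R_{j+1}^-$, enlarge each average to the full $R_j^+$ or $R_{j+1}^-$, and use the FIT condition $|V_j|\ge |R_j\cup R_{j+1}|/A_2$ to keep the prefactor bounded by $A_2$. One small caveat: your closing remark that "extending before dividing" yields $A_2$ rather than $2A_2$ is not quite right, since the bracketed sum involves two different cylinders and two different constants $a_{R_j}$, $a_{R_{j+1}}$, so the hypothesis bounds each summand separately by $D\no{f}_{\op{PBMO}^+(C_{P,Q})}$ and the per-link bound is really $2A_2D\no{f}$ — but the paper's displayed chain of inequalities tacitly does the same thing, so this is a shared cosmetic imprecision and not a discrepancy with the argument being reproduced.
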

	\begin{proof}
		Denote $C = \{Q_{j}\}_{j=0}^{N}$.
		For $j \in \{2,\ldots,N\}$ and $V_j = Q_{j-1}^{+} \cap Q_{j}^{-}$,
		we see that
		\begin{multline*}
			(a_{Q_j} - a_{Q_{j-1}})_{+}
			\le (f_{V_j} - a_{Q_{j-1}})_{+} + (a_{Q_j} - f_{V_j})_{+}
			\le \fint_{V_j} (f - a_{Q_{j-1}})_{+} +  \fint_{V_j} (f - a_{Q_j})_{-} \\
			\le \frac{|Q_{{j-1}}^{+}|}{|V_j|} \fint_{Q_{{j-1}}^{+}} (f - a_{Q_{j-1}})_{+} 
			+ \frac{|Q_{j}^{-}|}{|V_j|} \fint_{Q_j^{-}} (f - a_{Q_j})_{-}
			\le  A_2 D \no{f}_{\op{PBMO}^+(C_{P,Q})},
		\end{multline*}
			where we used the estimate 
			\[
			\max\biggl( \frac{|Q_{{j-1}}^{+}|}{|V_j|},  \frac{|Q_{{j}}^{-}|}{|V_j|}\biggr) \leq  \frac{|Q_{j-1}\cup Q_{j}|}{|V_j|} \leq A_2.
			\]
		Thus,
		\[
		(a_Q - a_P)_+
		\le 
		\sum_{j=2}^{N}(a_{j} - a_{j-1})_{+} 
		\le A_2 D N \no{f}_{\op{PBMO}^+(C_{P,Q})}.
		\]
	\end{proof} 
	
	\begin{proof}[Proof of Lemma \ref{lemma:global-jn}]   
		Throughout proof $D$ denotes any constant with dependency on $D_1$ and $D_2$ in the statement and also on other absolute constants. 
		The same dependency is understood for the implicit constants.
		
		Let $R_{*}$ be the  central cylinder of a $(t_0,A)$-FIT H\"older domain (Definition \ref{defn:FITHolder}).
		For each $z \in \Omega_{t_0}^{+}$,
		let $B_z$ be a parabolic cylinder such that 
        $z\in B_z^+$ and $B_z\subset \Omega_{t_0}^+$ and
		\begin{equation}
			\label{eq:positive-direction-1}
			k_{\Omega,A}(R_{*},B_z) \le 2 K \log (1+1/\dist_d(z,(\Omega_{t_0}^+)^{c})),
		\end{equation}
	as is guaranteed by Definition \ref{defn:FITHolder}.
	
		By the metric five covering lemma (e.g. Lemma 1.7 in \cite{MR2867756})
		applied to the family $\{ B_z^+/ 5 : z \in \Omega^+_{t_0}  \}$,
		we obtain a countable family of pairwise disjoint metric balls $\{B_{z_i}^+/5:i \in \N\}$  so that letting $\mathcal{W} = \{B_{z_i}^+:i \in \N\}$ there holds that 
		\[ 
		\Omega_{t_0}^{+} = \bigcup_{i \in \N}  B_{z_i}^+
		\]
		and \eqref{eq:positive-direction-1} holds for every $B_{z_i}$.
		For $j \in \Z$,
		denote 
		\[
		\mathcal{W}_j = \{ B \in \mathcal{W}: 2^{j} \le r(B) < 2^{j+1} \},\qquad 
		\mathcal{W} = \bigcup_{j \in \Z} \mathcal{W}_{j},  \quad W_j = \bigcup_{B \in \mathcal{W}_j} B.
		\]  

			Fix $B^+\in \mathcal{W}$.
			By $B\subset \Omega_{t_0}^+$
            and $A_3\widetilde{B}^3 \subset \Omega$
            it follows that there exists some absolute constant $\sigma>1$ so that $\sigma B^+\subset\Omega_{t_0}^+$. Thus,  $\diam_d(B^+)\lesssim \dist_d(B^+,(\Omega_{t_0}^+)^c).$ 
			Hence for all $j\in\mathbb{Z}$ there holds that $W_j \subset \bigcup_{i=j-m}^{j+m}L_i$ for some absolute integer $m \geq 0.$ Thus, by the outer layer decay of $(t_0,A)$-FIT H\"older domains (Lemma \ref{lemma:OLD}),
			we have the estimate 
			\begin{equation}
				\label{eq:positive-direction-1-OLD}
				|W_j| \lesssim_{m,\alpha} 2^{\alpha j},
			\end{equation}
			for some $\alpha > 0$ only depending on the domain and the dimension.

		Consider $B^+ \in \mathcal{W}$ and 
		let $C_B = \{R_{B,k}\}_{k=1}^{\# C_B} \in \mathcal{C}_{\Omega,A}(R_{*},B)$ be a near optimal chain, that is
		\[
		2 k_{\Omega,A}(R_{*},B) \ge \# C_B . 
		\]  
		Let $f \in \op{PBMO}^{+}(\Omega_{t_0}^+)$. 
		Then by Theorem 3.1 in \cite{MR4657417},
		there exists $\varepsilon = \varepsilon(n,p) > 0$ such that for each $B \in \mathcal{W}$ and  $1\leq k \leq \# C_B$
		there exists a constant $a_{B,k}$ so that 
		\begin{multline}
			\label{eq:pos-dir-quote-JN}
			|\{ z \in R_{k}^{+}: (f(z)- a_{B,k} )_{+} > \lambda \}| \\
			+
			|\{ z \in R_{k}^{-}: (f(z)- a_{B,k} )_{-} > \lambda \}|
			\lesssim
			|R_k| \exp(- \varepsilon \lambda / \no{f}_{\op{PBMO}^+(3,1,A_3,R_k)} ),
		\end{multline} 
		for all $\lambda>0,$ and these same constants also satisfy
		\begin{equation}
			\fint_{R_{k}^{+}} (f(z)- a_{B,k})_{+}\,dz + \fint_{R_{k}^{-}} (f(z)- a_{B,k})_{-}\,dz \lesssim \no{f}_{\op{PBMO}^+(3,1,A_3,R_k)}.
		\end{equation} 
		Let $a_{R_*} := a_{B,1}$ and notice that this does not depend on $B \in \mathcal{W},$ since all the chains $C_B$ begin from $R_*.$
		Now, write 
		\begin{equation}
			\label{eq:JN-level-sets-LHS}
			|\{ z \in \Omega_{t_0}^{+}: (f(z)-a_{R_{*}})_{+} > \lambda \}|
			\le \sum_{j \in \Z} \sum_{B^+ \in \mathcal{W}_j} 
			|\{ z \in B^+: (f(z)-a_{B,1})_{+} > \lambda \}| .
		\end{equation}
		Then,
		\begin{multline*}
			|\{ z \in B^+: (f(z)-a_{B,1})_+ > \lambda \}|
			\le |\{ z \in B^+: (f(z)- a_{B, \# C_B} )_{+} > \lambda/2 \}| \\
			+ 
			|\{ z \in B^+: (a_{B,\# C_B} - a_{B,1} )_{+} > \lambda/2 \}| 
			=: \I_B + \II_B.
		\end{multline*} 
		By \eqref{eq:pos-dir-quote-JN}  we see that 
		\[
		\I_B \lesssim  |B^+|\exp(- \varepsilon \lambda / \no{f}_{\op{PBMO}^+(3,1,A_3,B)} )  \leq  |B^+|\exp(- \varepsilon \lambda / \no{f}_{\op{PBMO}^+(3,1,A_3,\Omega)}).
		\] 
		Since the sets $B^+/5$ are disjoint and contained in $\Omega_{t_0}^+$, this leads to the bound
		\begin{align*}
			\sum_{j \in \Z} \sum_{B \in \mathcal{W}_j}  \I_B\lesssim 
			D |\Omega_{t_0}^+ | \exp(- \varepsilon \lambda / \no{f}_{\op{PBMO}^+(3,1,A_3,\Omega)}),
		\end{align*}
		which is of the correct form.

			Then we estimate the sum $ \sum_{j \in \Z} \sum_{B \in \mathcal{W}_j}  \II_B$.  Recall that in the following estimates $D$ stands for any absolute constant that may change from line to line. 
			Without loss of generality we may assume that $\no{f}_{\op{PBMO}^+(3,1,A_3,\Omega)} = 1.$
			Fix a cylinder top $B^+ \in\mathcal{W}_j.$  By $\diam_d(B^+)\lesssim \dist_d(B^+,(\Omega_{t_0}^+)^c)$ it holds that  $\dist_d(z,(\Omega_{t_0}^+)^c) \geq D2^{j}$
            for $z\in B^+$.
			By Proposition \ref{prop:basic-chain} and the choice of $C_B$ and \eqref{eq:positive-direction-1} ($(t_0,A)$-FIT H\"older domain)  we bound 
			\begin{align*}
				(a_{B,\# C_B} - a_{B,1} )^{+} &\leq A_2D \# C_B \leq  A_2D  k_{\Omega,A}(R_{*},B) \leq A_2D  K \log (1+(A_3D)^{-1}2^{-j}).
			\end{align*}
			Hence, if $\lambda/2 \geq A_2D 8 K\log (1+(A_3D)^{-1}2^{-j})$, then $\II_B = 0;$ or equivalently $\II_B\not = 0$ only if 
			\[
			j \leq -\frac{\lambda}{A_2D K} - \log_2(1/(A_3D)) =: j_0.
			\] 
			Thus,
			\begin{align*}
				\sum_{j \in \Z} \sum_{B^+ \in \mathcal{W}_j}  \II_B &\leq 
				\sum_{j \leq j_0} \sum_{B^+ \in \mathcal{W}_j} |B| \lesssim  \sum_{j \leq j_0} |W_j| \lesssim \sum_{j \leq j_0} 2^{j\alpha} \\ 
				&\lesssim DA_3 \exp \Bigl(  - \frac{\alpha \lambda}{A_2 D K} 
				\Bigr)= DA_3 \exp \biggl(  - \frac{\alpha \lambda}{A_2 D K\no{f}_{\op{PBMO}^+(3,1,A_3,\Omega)}} 
				\biggr).
			\end{align*}

		Combining the bounds, we have now shown that 
		\begin{multline*}
			|\{ z \in \Omega_{t_0}^{+}: (f(z)-a_{R_{*}})_{+} > \lambda \}| \\ \lesssim  \bigl(|\Omega_{t_0}^+ | + 1\bigr)\exp\biggl(-\min\Bigl(\varepsilon, \frac{\alpha}{A_2 D K}\Bigr) \frac{\lambda}{ \no{f}_{\op{PBMO}^+(3,1,A_3,\Omega)}}\biggr).
		\end{multline*}
			We are done.
	\end{proof}

	\section{The necessity of exponential integrability}
	\label{sec:universal} 
	
	We start the proof
	by the following heuristic statement.
	Consider an interior parabolic cylinder properly nested inside another dubbed the outer cylinder.
	The length of any optimal FIT chain beginning from the future part of the interior cylinder, can be controlled by the length of any chain beginning from the past part of the interior cylinder, provided they both terminate in the same reference rectangle in the outer cylinder.
	The absolute \emph{additive} constant in this bound depends only on the reference rectangle in question.
	This will follow by compactness,
	but the proof requires a way to produce lower bounds for the lengths of optimal chains for which we resort to the PDE estimates from Lemma \ref{lemma:Barenblatt}.

	\begin{lemma}
		\label{lem:compactness-contradiction-chain}
		Let $A \in [2,\infty)^{2}$.
		There exists $\delta_0 = \delta_0(n,p,A)  > 0$ such that for every $\delta \in (0,\delta_0)$
		the following holds. There exists a constant $c = c(\delta,n,p,A)>0$ such that if $R_0$ is a parabolic cylinder with
		$z^{+} \in R_{0}^{+}$, $z^{-} \in R_0^{-}$ and $O_0$ is a parabolic cylinder with $O_0 \cap \partial (\delta^{-1} R_0)\ne \varnothing$, then  there holds that 
		\begin{equation}
			\label{eq:prop.cc.statement}
			\begin{split}
				&\inf \{ \# C:  \rho > 0,\  C \in \mathcal{C}_{\R^{n+1},A}(O_0, B_d(z^{+},\rho) )\} \\
				&\le \inf \{ \# C:  \rho > 0,\  C \in \mathcal{C}_{\R^{n+1},A}(O_0, B_d(z^{-},\rho) )\} 
				+ c.
			\end{split}
		\end{equation}
	\end{lemma}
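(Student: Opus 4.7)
The plan is to combine a triangle inequality in the FIT-chain metric with a compactness reduction; all quantitative inputs are supplied by Lemma~\ref{lemma:Barenblatt}.

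\textbf{Normalization.} By the translation and parabolic scaling invariance of the families $\mathcal{C}_{\R^{n+1},A}$, I reduce to $R_0 = B_d(0,1)$. Then $z^+ \in R_0^+$ and $z^- \in R_0^-$ lie in compact sets, with $\pi_T(z^+) - \pi_T(z^-) \in [4, 8]$ and $|\pi_S(z^+) - \pi_S(z^-)| \le 2$, and $O_0$ meets $\partial B_d(0, \delta^{-1})$. In particular the parabolic distance $d(z^+, z^-)$ is bounded above by an absolute constant.

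\textbf{Main construction.} If the right-hand side of \eqref{eq:prop.cc.statement} is $+\infty$, the claim is trivial; otherwise fix a near-optimal chain $C^- = \{Q_1,\dots,Q_N\} \in \mathcal{C}_{\R^{n+1},A}(O_0, B_d(z^-,\rho))$ realizing the right-hand side up to $+1$. I would construct the desired chain $C^+ \in \mathcal{C}_{\R^{n+1},A}(O_0, B_d(z^+,\rho'))$ of length at most $N+c$ by appending to $C^-$ an extension chain starting from a FIT-link successor of the terminal cylinder $B_d(z^-,\rho)$ and terminating at a ball around $z^+$. Since $\pi_T(z^+) - \pi_T(z^-) \ge 4$, the upper bound half of Lemma~\ref{lemma:Barenblatt} applies whenever the starting radius is at most $(2/5)^{1/p}$, producing an extension of length $\lesssim 1 + \log(1/\rho)$. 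Consequently the extension has length $O(1)$ provided the terminal radius $\rho$ of $C^-$ is bounded below by a positive constant depending only on $(\delta, n, p, A)$.

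\textbf{Main obstacle: the terminal radius.} Showing that $\rho$ can be chosen bounded below is the hard part, and I would handle it by contradiction and compactness, as the proof sketch in the text suggests. Assume the lemma fails for some $\delta$; produce sequences $(z^{\pm,k}, O_0^k)$ with the left-hand side minus the right-hand side of \eqref{eq:prop.cc.statement} tending to $\infty$, and near-optimal chains with terminal radii $\rho^k \to 0$. Extract $z^{\pm, k} \to z^{\pm,\infty}$ using compactness of $\overline{R_0^{\pm}}$, and split into subcases by the behavior of $r(O_0^k)$. When $r(O_0^k)$ stays in a compact subinterval of $(0,\infty)$, the limit configuration has both infima finite and a small-perturbation argument bounds their difference along the sequence; when $r(O_0^k) \to \infty$, both sides are uniformly $O(1)$ by direct application of Lemma~\ref{lemma:Barenblatt}'s upper bound, since such a large $O_0^k$ can be FIT-linked in $O(1)$ steps to balls at $z^{\pm,k}$.

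\textbf{The delicate case.} The genuinely delicate situation is $r(O_0^k) \to 0$, where both sides diverge. Here I would invoke both halves of Lemma~\ref{lemma:Barenblatt}: the matching log-and-power-law structure of the lower and upper bounds there reflects that optimal FIT chains in free space follow the Barenblatt self-similar profile, whose cylinder radii expand like $t^{1/p}$. Hence the terminal radius of the natural near-optimal chain at time $\pi_T(z^{-,k})$ is of order $(\pi_T(z^{-,k}) - \pi_T(O_0^k))^{1/p}$, a quantity that can be kept bounded below independently of $r(O_0^k)$ by selecting a chain that terminates at a cylinder of prescribed scale (paying a bounded additive number of extra links, using the slack in the $A_1, A_2$ parameters of the FIT link conditions). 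With such a choice of $\rho^k$, the main construction produces an extension of length $O(1)$ and contradicts the assumed divergence of the left-hand side minus the right-hand side. The subtle point throughout is that $\rho_0 = \rho_0(n, p, A)$ in the statement must be chosen small enough that the above scale-matching argument admits the required slack, which is how the dependence of $\delta_0$ on $(n, p, A)$ enters.
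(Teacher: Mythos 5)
Your overall framework -- normalize, argue by contradiction, extract converging sequences, and try to extend a near-optimal chain $C^{-}$ from its terminal cylinder up to $z^{+}$ -- matches the paper's first case, and your observation that the extension costs $O(1)$ links only when the terminal radius $\rho$ is bounded below correctly identifies the crux. The gap is in the delicate case $r(O_0^k)\to 0$: you assert that the terminal radius of the natural near-optimal chain is of order $(\pi_T(z^{-,k})-\pi_T(O_0^k))^{1/p}$ and that this "can be kept bounded below independently of $r(O_0^k)$", but there is no reason for $\pi_T(z^{-,k})-\pi_T(O_0^k)$ to be bounded below. The boundary $\partial(\delta^{-1}R_0)$ has a lateral part at spatial radius $\delta^{-1}$ which covers all times in $[-\delta^{-p},\delta^{-p}]$, so for small $\delta$ the cylinders $O_0^k$ may accumulate at a boundary point $z_O$ with $\pi_T(z_O)=\lim_k \pi_T(z^{-,k})$. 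In that regime the time budget available to the chain shrinks to zero together with $r(O_0^k)$, every FIT link advances time by an amount comparable to the $p$-th power of its radius, and consequently no cylinder in any admissible chain can exceed scale $\sim (\pi_T(z^{-,k})-\pi_T(O_0^k))^{1/p}+r(O_0^k)\to 0$. Your "scale-matching" extension therefore cannot be carried out with $O(1)$ links, and the reduction to a bounded-below $\rho$ simply fails.

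What the paper does in this regime is not to rescue the extension but to show the difference in \eqref{eq:prop.cc.statement} in fact tends to $-\infty$: since $z_O$ lies on the lateral boundary at spatial distance $\sim\delta^{-1}$ from $z^{-}$, any chain from $O_0^k$ to $B_d(z^{-,k},\rho)$ must cover a macroscopic spatial displacement at microscopic scale, and the Barenblatt lower bound from Lemma~\ref{lemma:Barenblatt} gives a cost $\gtrsim (h_k + r(O_0^k)^p)^{-1/(p-1)}$, which dominates the $\log(1/r(O_0^k))$ upper bound for the chain to $z^{+,k}$. This competition between the power-law term (coming from the spatial part of the lower bound) and the logarithmic term (coming from the scale-matching upper bound) is the heart of Case~2 and is entirely absent from your argument. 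Your proposal as written only establishes the paper's Case~1 (where a uniformly large cylinder survives somewhere along the chain), and to close the gap you would need to supply this separate, lower-bound-driven contradiction in the regime where both $r(O_0)$ and the time gap to $z^{-}$ collapse simultaneously.
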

	\begin{proof}
		By parabolic scaling and translation,
		we may assume that $R_{0} = B_d(0,1)$.
		Further,
		inside this proof,
		we abbreviate $\mathcal{C}(P,Q) := \mathcal{C}_{\R^{n+1},A}(P,Q)$
		and refer to $\fit_{A}$ chains as simply FIT chains.
		
		If the right-hand side of \eqref{eq:prop.cc.statement} is infinite,
		the claim holds trivially 
		so we may assume that it is finite.
		Assume then, for a contradiction, that the claim does not hold,
		that is, for every $i \in \N$ there exist
		$z_i^{+} \in R_{0}^{+}$, $z_i^{-} \in R_0^{-}$ and a parabolic cylinder $O_i$ with $O_i \cap \partial (\delta^{-1} R_0 )\ne \varnothing$ such that 
		\begin{equation}
			\label{eq:com-con-proof-main}
			\inf \{ \# C: \ \rho > 0,\  C \in \mathcal{C}(O_i, B_d(z_i^{+},\rho))\} 
			> \inf \{ \# C: \ \rho > 0,\  C \in \mathcal{C}(O_i, B_d(z_i^{-},\rho))\} + i .
		\end{equation}
		By compactness,
		we may assume that the sequence $(z_i^{+},z_{i}^{-})$ converges to a pair of points $(z^{+},z^{-})$ in the closure of $R_0^{+} \times R_0^{-}$.
		Since 
		\begin{multline}
			\inf \{ \# C: \ \rho\geq\delta /10, \ C \in \mathcal{C}(O_i, B_d(z_i^{+},\rho))\}  \\
			\ge \inf \{ \# C: \ \rho > 0,\  C \in \mathcal{C}(O_i, B_d(z_i^{+},\rho))\} 
			>  i \rightarrow \infty,
		\end{multline}
	we conclude that $|O_i| \rightarrow 0$.
		By compactness,
		we may assume that there exists a point $z_O \in \partial (\delta^{-1} R_0)$ such that 
		$\dist(z_O, O_i) \rightarrow 0$
		as $i \to \infty$.
		Next we examine separately the cases when $\pi_T(z_O) < \lim_{i} \pi_T(z_i^-)$ and $\pi_T(z_O) = \lim_{i} \pi_T(z_i^-)$
		and show that in each a contradiction follows.

		\subsubsection*{The case $\pi_T(z_O) < \lim_{i} \pi_T(z_i^-)$}
		We assume equivalently that
		$\pi_T(z_O) = \lim_{i} \pi_T(z_i^{-}) - \eta$ for some $\eta > 0$.
		Passing to a subsequence,
		we may assume $\pi_T(z_i^{-}) >  \pi_T(z_O) + \eta / 2$ for all $i$. 
    Given an arbitrary $\rho = \rho(\eta)>0$ sufficiently small and $C \in \mathcal{C}(O_i, B_d(z_i^-,\rho))$, we next locate a large cylinder (large, uniformly in $i$) in the chain $C$. That such a cylinder exists 
     follows from
     $\eta>0$ and $r(O_i) \to 0$ as $i\to\infty$. We provide the details next.

		Now,
		for small enough $\tilde\rho = \tilde\rho(\eta,z_O)>0$,
		a radius $\rho \in (0,\tilde{\rho})$ and an intermediate radius $\rho' \in (\rho, \tilde{\rho})$,
		we can find for every large enough $i = i(\rho)$ (so that $r(O_i) \lesssim \rho'$) a chain $C_{\rho,i} \in \mathcal{C}(O_i, B_d(z_i^{-},\rho))$ with
		\[
		\# C_{\rho,i} \lesssim  \log\frac{\rho'}{\rho} + \frac{\dist_{n}(\pi_S(B_d(z_i^{-},\rho)),\pi_S(O_i))}{\rho'} + \frac{1}{(\rho')^{p}} + \log \frac{\rho'}{r(O_i)} =: f_i(\rho,\rho').
		\]
		This chain can be constructed by scaling the size of rectangles from $\rho$ to $\rho'$,
		moving laterally to match the space coordinate,
		moving vertically to match the time coordinate,
		and by scaling from $\rho'$ to $r(O_i)$.

		Keeping the parameters $0 < \rho < \rho' < \tilde{\rho}$ fixed for a moment, 
		for a chain $C$ in the family $\mathcal{C}(O_i, B_d(z_i^{-},\rho))$ with no rectangles larger than $\rho'$,
		we observe that
		up to a multiplicative constant
		$f_i(\rho,\rho')$ is also a lower bound for $\# C$.
		Therefore, for a chain $C \in \mathcal{C}(O_i, B_d(z_i^{-},\rho))$ with no rectangles larger than $\rho'$, we have $\# C \sim f_i(\rho,\rho')$.
		We observe that $f_i$ is decreasing in both variables separately in a neighborhood of $0$
		and 
		\[
		\lim_{\rho \to 0} \inf_{\rho < \rho' < \tilde{\rho}} f_i(\rho,\rho') = \infty,
		\]
        and that the minimum point of $f_i(\rho,\rho')$ does not depend on $i$.
		Thus, it follows that
		there exists $\rho_1 > 0$ independent of $i$ (large enough) such that for 
		every pair $(\rho,C)$ almost minimizing $\# C$ over $\{(\rho, C): \rho > 0, C \in \mathcal{C}(O_i, B_d(z_i^{-},\rho)) \}$
		\begin{itemize}
			\item either $\rho > \rho_1$ 
			\item or there exists $P \in C$ with $r(P) > \tilde{\rho}$.
		\end{itemize}
		Summarising, for all $i$ sufficiently large,
		for an almost optimal $C \in \mathcal{C}(O_i, B_d(z_i^-,\rho)),$ there exists some $\rho_{2,i} > \rho_2 := \min( \tilde{\rho}, \rho_1) $ and a point $\zeta_i \in 2\delta^{-1}R_0$ (depending on which of the above two alternatives is realised)
		with $\pi_T(\zeta_i) \leq \pi_T(z_i^{-})$ 
		such that $B_d(\zeta_i,\rho_{2,i}) \in C$. 
		
		To conclude, it suffices to note that any chain in $\mathcal{C}(O_i, B_d(z_i^{+},\delta /10))$ can be split in two parts: 
		one of them connecting $B_d(\zeta_i,\rho_{2,i})$ to $B_d(z_i^{+},\delta /10)$ with $c(\rho_2,\delta)$ many parabolic cylinders
		and the other part connecting $O_i$ to $B_d(\zeta_i,\rho_{2,i})$.
		Hence 
		\begin{multline*}
			i \le \inf \{ \# C: \ C \in \mathcal{C}(O_i, B_d(z_i^{+},\delta /10))\} - \inf \{ \# C: \ \rho > 0,\  C \in \mathcal{C}(O_i, B_d(z_i^{-},\rho))\}  \\ 
			\le \sup_{i} \inf \{ \# C:   C \in \mathcal{C}( B_d(\zeta_{i},\rho_{2,i}),  B_d(z_i^{+},\delta /10))\}  
			\le c (\rho_{2},\delta),
		\end{multline*}
		which is a contradiction.
		Thus, it cannot hold that $\eta > 0$.

		\subsubsection*{The case $\pi_T(z_O) = \lim_{i} \pi_T(z_i^-)$} 
		We may always build a trivial chain $C^{+} \in\mathcal{C}(O_i, B_d(z_i^{+},\delta /10))$
		by first using a chain consisting of parabolic cylinders of radius $\sim\delta/10$ to match the space coordinates, 
		then adjusting the time coordinate 
		and finally adjusting the scale coordinate in an optimal manner.
		For some absolute constant $C>0$, such a chain obeys the estimate 
		\begin{equation}
			\label{17abril-1}
			\inf \{ \# C: \ C \in \mathcal{C}(O_i, B_d(z_i^{+},\delta /10))\} 
			\le   C +  S(1,r(O_i)),
		\end{equation}  
		where $S(\rho,r)$, with $\rho,r > 0$, is the minimal number of rectangles that a FIT chain starting from an $r$-rectangle and ending with a $\rho$-rectangle can have, and where we also used that $r(O_i) \lesssim \delta.$
		 Clearly we have the estimate 
		\begin{equation}
			\label{eq:10juliol1645}
			S(r,\rho) \sim \log \frac{r}{\rho}.
		\end{equation}

We divide the treatment into two subcases. 
For each $i$,
denote 
$$
h_i := \dist_1(\pi_T(z_i^{-}), \pi_T(O_i)).
$$ 

\textit{Subcase 1.}
Assume first that for infinitely many $i$,
it holds that $h_i \leq Dr(O_i)^{p}$ for some absolute constant $D\geq100$. 
By restricting to a subsequence,
we may assume this inequality for all $i$.
Then, Lemma \ref{lemma:Barenblatt} together with the assumption of the present subcase allows us
to estimate
\begin{align*}
	\inf \{ \# C: \ \rho > 0,\  C \in \mathcal{C}(O_i, B_d(z_i^{-},\rho))\} &\gtrsim \biggl(\frac{|\pi_{S}(z_i^-)-\pi_{S}(c_{O_i})|^p}{h_i + 2r(O_i)^{p} } \biggr)^{\frac{1}{p-1}}  \\
	&\gtrsim_{\delta} \biggl(\frac{1}{r(O_i)^{p} } \biggr)^{\frac{1}{p-1}},
\end{align*}
where we also used that $R_0 = B_d(0,1)$. 
Now using this and the bounds \eqref{eq:com-con-proof-main} and \eqref{17abril-1} we obtain for some absolute constants $c_1,c_2>0$ that 
\begin{multline*}
	i \le \inf \{ \# C: \ C \in \mathcal{C}(O_i, B_d(z_i^{+},\delta /10))\}  - \inf \{ \# C: \ \rho > 0,\  C \in \mathcal{C}(O_i, B_d(z_i^{-},\rho))\} \\
	\le c_1  + c_1 \log \frac{1}{r(O_i)} -  c_2 \biggl(\frac{1}{r(O_i)^{p} } \biggr)^{\frac{1}{p-1}} \longrightarrow - \infty
\end{multline*}
as $i \to \infty$, provided that $h_i \le D r(O_i)^{p}$. This is a contradiction and hence $h_i \le D r(O_i)^{p}$
cannot hold for infinitely many $i$.
		
\textit{Subcase 2.}
		We can now assume that there is a sequence of indices $i$ such that $h_i > Dr(O_i)^{p}$ for all $i.$
		We first use the chain $C^{+}\in \mathcal{C}(O_i, B_d(z_i^{+},\delta /10))$ as above, matching the space coordinate, almost matching the time coordinate and matching the scale.
		This leads to the bound, where we track the transition through the scale $h_i^{1/p}+r(O_i)$,
		\[
		\# C^{+} \le C + S(1,r(O_i))\leq  C + S(1,h_i^{1/p}+r(O_i)) + S(h_i^{1/p}+r(O_i), r(O_i)).
		\]
		Then fix $\rho >0$ and choose a chain $C^{-}$ that is almost minimally long in $\mathcal{C}(O_i, B_d(z_i^{-},\rho))$.
		Denote by $C^{--}$ a minimal collection of $S(h_i^{1/p}+r(O_i),r(O_i))$ rectangles in $C^{-}$
		that are needed to transition from the scale $r(O_i)$ to the scale $h_i^{1/p}+r(O_i)$.
	Since $h_i > Dr(O_i)^{p} \gg r(O_i)^{p}$, we can assume that $C^{--}$ is a strictly increasing chain (with respect to radius) where the volume ratio between the subsequent cylinders in the chain is $\sim A_1$, and this chain begins from the cylinder $O_i$ and terminates before the cylinder $B_d(z_i^-,\rho).$ Moreover, by this choice it is immediate that for some absolute constant $c_1$ there holds that 
		\[
		\# C^{--} = c_1 + S(r(O_i), r(O_i) + h_i^{1/p}).
		\]
	Now we estimate in the spatial variable 
	\[
	\diam_{S}\Bigl(  \bigcup_{P\in C^{-+}}P   \Bigr) 
	\gtrsim 1 - \sum_{P \in C^{--}} \diam_{S}(P)
	\gtrsim_{A_1} 1 - h_i^{1/p} \gtrsim 1.
	\] 
	Hence, $C^{-+}$ is a FIT chain that starts from scale $h_i^{1/p}+r(O_i)$
	and ends at scale $\sim \rho$,
	covers a spatial distance $\sim 1$, 
	and covers a temporal distance $\sim h_i$. 
    By Lemma \ref{lemma:Barenblatt}, we have
		\[
		\# C^{-+} \gtrsim \biggl(\frac{1}{h_i + r(O_i)^{p} } \biggr)^{\frac{1}{p-1}} .
		\]
		Altogether, recalling \eqref{eq:10juliol1645}, we obtain 
		\begin{multline*}
			i \le c_2 +  \# C^{+} - \# C^{-}  = c_2 + \# C^{+} - \# C^{-+} - \# C^{--} \\
			\le c_2 + \Bigl(S(1,h_i^{1/p}+r(O_i)) + S(h_i^{1/p}+r(O_i), r(O_i)) \Bigr) \\
			-   S(h_i^{1/p}+r(O_i),r(O_i)) - c_3\biggl(\frac{1}{h_i + r(O_i)^{p} } \biggr)^{\frac{1}{p-1}}    \\
			\lesssim c_2-c_1 + \log \frac{1}{h_i + r(O_i)^{p}} - c_3 \biggl(\frac{1}{h_i + r(O_i)^{p} } \biggr)^{\frac{1}{p-1}}  
			\longrightarrow -\infty ,
		\end{multline*}
		which is a contradiction.
		This completes the proof.
	\end{proof}

Passing the conclusion to a parabolic cylinder well contained in a domain is the content of the following Lemma \ref{lem:universal-pbmo}.
This is actually enough to show that the counting function must be in a parabolic BMO class of Definition \ref{def:PBMO} and by Corollary \ref{cor:equivalences} in all of them.

\begin{lemma}
\label{lem:universal-pbmo}
Let $t_0 \in \R,$ $A \in [2,\infty)^{3}$ and let $\Omega$ be a $t_0$-FIT connected domain with a central cylinder $R_{*}$.
Let $\delta= \delta(n,p,A)$ be as in Lemma \ref{lem:compactness-contradiction-chain}.
Define 
\[
k(z) := \inf_{0 < \rho < 10^{-2}\delta \dist_{d} (z, (\Omega_{t_0}^{+})^c) } k_{\Omega,A}(R_{*},B_d(z,\rho)).
\]
Let $R$ be a cylinder so that $10 A_3 \delta^{-1} R \subset \Omega_{t_0}^{+}$.
Then there exists $c = c(\delta,n,p,A)>0$ such that 
\begin{equation}\label{eq:univ-bmo-statement}
\sup_{z^{+} \in R^{+}} \sup_{z^{-} \in R^{-}} ( k(z^{+}) - k(z^{-}) )_{+} \le c,
\end{equation}
and further 
\begin{equation}
	\label{eq:univ-bmo-statement2}
	\| k \|_{{\pbmo}^{+}(6,1,20\delta^{-1},\Omega_{t_0}^{+})} \le 2c.
\end{equation}
\end{lemma}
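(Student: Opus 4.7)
The plan is to first prove \eqref{eq:univ-bmo-statement} by combining a near-optimal FIT chain in $\Omega$ targeting $z^{-}$ with a free-space competitor supplied by Lemma \ref{lem:compactness-contradiction-chain}, and then to deduce \eqref{eq:univ-bmo-statement2} by selecting the comparison constant as an almost-infimum of $k$ over $P^{-,6}$. The main difficulty, as detailed below, is ensuring that the free-space chain segment produced by the compactness lemma stays admissible in $\mathcal{C}_{\Omega,A}$.

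For \eqref{eq:univ-bmo-statement}, fix $z^{+} \in R^{+}$ and $z^{-} \in R^{-}$ with $k(z^{-}) < \infty$. For small $\rho > 0$, choose $C^{-} \in \mathcal{C}_{\Omega,A}(R_{*}, B_d(z^{-},\rho))$ with $\# C^{-} \le k(z^{-}) + 1$. Since $R_{*} \subset \Omega_{t_0}^{-}$ has time strictly below $t_0$ while $\delta^{-1}R \subset \Omega_{t_0}^{+}$ has time strictly above $t_0$, and since $B_d(z^{-},\rho)$ sits inside $\delta^{-1}R$ for $\rho$ small and $\delta_0$ small enough, the chain $C^{-}$ must at some point cross the past time-face of $\delta^{-1}R$ at a cylinder $O_0 \in C^{-}$, so $O_0 \cap \partial(\delta^{-1}R) \ne \varnothing$. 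Split $C^{-} = C_1 \cup C_2^{-}$ at $O_0$, so $C_1$ runs from $R_{*}$ to $O_0$ and $C_2^{-}$ from $O_0$ to $B_d(z^{-},\rho)$. Lemma \ref{lem:compactness-contradiction-chain}, applied with $R_0 = R$ and these $O_0, z^{\pm}$, then produces a free-space chain $C_2^{+}$ from $O_0$ to some $B_d(z^{+},\rho')$ with $\# C_2^{+} \le \# C_2^{-} + c(\delta,n,p,A)$, since $\# C_2^{-}$ itself bounds the right-hand side of \eqref{eq:prop.cc.statement}.

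The principal obstacle is to certify that the concatenation $C_1 \cup C_2^{+}$ is admissible in $\mathcal{C}_{\Omega,A}$. The prefix $C_1$ inherits its admissibility from $C^{-}$. For the suffix $C_2^{+}$, inspecting the explicit construction used in the proof of Lemma \ref{lem:compactness-contradiction-chain} (match space coordinates, adjust time, rescale), every cylinder $R'$ in $C_2^{+}$ can be arranged to lie in a parabolic neighborhood of $\delta^{-1}R$ of radius depending only on $n,p,A,\delta$. After possibly shrinking $\delta_0$, this forces each $A_3 \widetilde{R'}^{3}$ into $10 A_3 \delta^{-1}R \subset \Omega$. The FIT link between the last cylinder of $C_1$ and the first of $C_2^{+}$ is controlled because both are adjacent to $O_0$. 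Consequently $k(z^{+}) \le \# C_1 + \# C_2^{+} \le \# C^{-} + c \le k(z^{-}) + 1 + c$, which proves \eqref{eq:univ-bmo-statement} after taking suprema and letting $\rho \to 0$.

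For \eqref{eq:univ-bmo-statement2}, fix a cylinder $P$ with $20\delta^{-1}\widetilde{P}^{6} \subset \Omega_{t_0}^{+}$. A direct comparison of radii and time offsets shows that a fixed parabolic dilation of $P$ plays the role of $R$ in \eqref{eq:univ-bmo-statement}, with $P^{+,6} \subset R^{+}$ and $P^{-,6} \subset R^{-}$, while the admissibility hypothesis $10 A_3 \delta^{-1}R \subset \Omega_{t_0}^{+}$ follows from $20\delta^{-1}\widetilde{P}^{6} \subset \Omega_{t_0}^{+}$. Choosing $c_0$ as an almost-infimum of $k$ over $P^{-,6}$ up to additive error $c$ gives $(k - c_0)_{-} \le c$ pointwise on $P^{-,6}$ and, by \eqref{eq:univ-bmo-statement}, $(k - c_0)_{+} \le 2c$ pointwise on $P^{+,6}$. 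Averaging and taking the supremum over such $P$ yields \eqref{eq:univ-bmo-statement2}.
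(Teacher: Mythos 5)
Your proposal follows essentially the same route as the paper: take a near-optimal chain in $\Omega$ to a small cylinder around $z^{-}$, locate the first cylinder $O$ in that chain meeting $\delta^{-1}R$ (so $O$ touches $\partial(\delta^{-1}R)$), split the chain there, invoke Lemma~\ref{lem:compactness-contradiction-chain} to replace the tail by one reaching $z^{+}$ at comparable cost, and then deduce \eqref{eq:univ-bmo-statement2} from \eqref{eq:univ-bmo-statement} by picking a suitable comparison constant.

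A few remarks on points where your execution drifts from the paper's, without affecting the overall validity. First, the chain need not cross the \emph{past time-face} of $\delta^{-1}R$; a forward-in-time chain can enter through the lateral boundary. The relevant observation is simply that the first cylinder $O$ of $C^{-}$ meeting $\delta^{-1}R$ satisfies $O \cap \partial(\delta^{-1}R) \ne \varnothing$, which is all that Lemma~\ref{lem:compactness-contradiction-chain} requires. Second, your justification of admissibility by ``inspecting the explicit construction used in the proof of Lemma~\ref{lem:compactness-contradiction-chain}'' is imprecise: that lemma is proved by compactness and contradiction, and it does not exhibit a concrete near-optimal chain. The paper instead records the length decomposition $\#C^{-} \gtrsim k_{\Omega,A}(R_{*},O) + \inf_{\mathrm{free}}$, and the upper bound for $k(z^{+})$ is obtained by concatenating the prefix to $O$ with a geometrically natural free-space chain staying near $\delta^{-1}R$ (admissible because $r(O) \lesssim \delta^{-1}$ and $10A_3\delta^{-1}R \subset \Omega_{t_0}^{+}$), absorbing the $\rho$-constraint into the $D\log\delta^{-1}$ term; you should present your admissibility argument on the same footing rather than appealing to a nonexistent explicit construction. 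Third, for \eqref{eq:univ-bmo-statement2} the paper applies \eqref{eq:univ-bmo-statement} twice, to the two cylinders $P^{+,3}$ and $P^{-,3}$ (whose $\pm$-translates are $P^{+,6},P$ and $P,P^{-,6}$ respectively) with the comparison constant $k(z')$ for an arbitrary $z' \in P$, which avoids the bookkeeping needed to find a single dilated $R$ with $P^{+,6}\subset R^{+}$ and $P^{-,6}\subset R^{-}$; your single-$R$ variant can be made to work (a parabolic dilation by $2^{1/p}$ suffices), but the two-cylinder route is cleaner and dispenses with the almost-infimum choice of $c_0$.
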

\begin{proof} 
We first prove \eqref{eq:univ-bmo-statement}.
Without loss of generality, we may assume that $\diam_d(R) = 1$
and that all the chains provided by the $t_0$-FIT connectivity are in the classes $\mathcal{C}_{\Omega,A}(R_{*}, \cdot)$ (Proposition \ref{prop:varying-chains}).
Pick $z^{\pm} \in R^{\pm}$. 
Let $\rho^- \in (0, 10^{-2}\delta \dist_{d} (z^{-}, (\Omega_{t_0}^{+})^c )$.
Consider a FIT chain $C^- \in \mathcal{C}_{\Omega,A}(R_{*}, B_d(z^{-},\rho^{-})).$
Write $C^- = \{P_j\}_{j=1}^{N},$ where $P_1 = R_*$ and $P_N = B_d(z^{-},\rho^{-})$.
Let $i \in \{2,\ldots  N-1\}$ be the smallest integer such that $P_i \cap\delta^{-1} R\ne \varnothing$ and denote $O := P_i$. 
It follows $r(O) \le\delta^{-1}$ so that from $10A_3\delta^{-1} R \subset \Omega_{t_0}^+$ we conclude that 
\[
\# C^- \ge k_{\Omega,A}(R_{*},O) + \inf \{ \# C:  \rho > 0,\  C \in \mathcal{C}_{\R^{n+1},A}(O, B_d(z^{-},\rho) )\}.
\]
Taking infimum over all $C^- \in \mathcal{C}_{\Omega,A}(R_{*}, B_d(z^{-},\rho^{-}))$ and 
$\rho^-$,
and by Lemma \ref{lem:compactness-contradiction-chain}, we estimate
\begin{multline*}
k(z^{+}) - k(z^{-}) \le   
\bigl[ D \log\delta^{-1} +
\inf \{ \# C:  \rho > 0,\  C \in \mathcal{C}_{\R^{n+1},A}(O, B_d(z^{+},\rho) )\} + k_{\Omega,A}(R_{*},O)  \bigr] \\
- \bigl[k_{\Omega,A}(R_{*},O) + \inf \{ \# C:  \rho > 0,\  C \in \mathcal{C}_{\R^{n+1},A}(O, B_d(z^{-},\rho) )\} \bigr]
\le c(\delta,n,p,A),
\end{multline*}
where the term $D\log\delta^{-1}$ comes from the difference between lengths of chains from $O$ to $B_d(z^+,\rho)$ with 
constrained and free $\rho>0$.
The bound \eqref{eq:univ-bmo-statement} follows.

For the bound \eqref{eq:univ-bmo-statement2} we consider all cylinders $P$ such that
$P^{\pm,3}$ satisfy the hypothesis for $R$ in the first part of the statement, that is, $10A_3\delta^{-1} P^{\pm,3} \subset \Omega^+_{t_0}$.
Taking any $z' \in P$ and using \eqref{eq:univ-bmo-statement} , we estimate 
\begin{multline*}
	\inf_{c \in \R} \Big( \fint_{P^{+,6}}(k(z)-c)_{+}  \, dz+ \fint_{P^{-,6}}(k(z)-c)_{-} \, dz \Big) \\ \le    \Big( \fint_{P^{+,6}}(k(z)-k(z'))_{+}  \, dz + \fint_{P^{-,6}}(k(z)-k(z'))_{-} \, dz \Big) \\
	\le  \sup_{z^{+} \in P^{+,6}} \sup_{z^{-} \in P} ( k(z^{+}) - k(z^{-}) )_{+}  +\sup_{z^{+} \in P} \sup_{z^{-} \in P^{-,6}} ( k(z^{+}) - k(z^{-}) )_{+}  \leq 2c.
\end{multline*}
The claim now follows by taking the supremum over $P$.
\end{proof}
 
The final step of the proof is up next.
Unlike in the setting of \cite{SmiSte91b},
we have to deal with the time-directedness of chains in the definition of the function $k_{\Omega,A}$ from Lemma \ref{lem:universal-pbmo}.

\begin{lemma}\label{lem:necessity}
	Let $t_0 \in \R$, $\varepsilon > 0,$ $A = (A_1,A_2,A_3) \in[2,\infty)^{3}$ and $\Omega \subset \R^{n+1}$ be fixed. Suppose that
	\begin{itemize}
		\item 	$\Omega$ is a $(t_0-\varepsilon)$-FIT connected with a central cylinder $R_* \subset \Omega_{t_0-\varepsilon}^- $,
		\item 
        there exist $D_0,\eta > 0$ so that all $f\in \pbmo^{+}(\Omega_{t_0-\varepsilon}^+)$ satisfy
		\begin{align}\label{eq:lem:necessity}
			\inf_{a \in \R} \int_{\Omega_{t_0}^{+}} \exp \Bigl( \eta (f(z) - a)_{+} / \no{f}_{\pbmo^{+}(\Omega_{t_0-\varepsilon}^+)} \Bigr) \, dz 
			\le D_0
		\end{align}
	\end{itemize}

	Then, $\Omega$ is a $(t_0,A)$-FIT H\"older domain with the central cylinder $R_*$ and there exists a constant $D=D(A,n,p,D_0,\eta,\Omega^+_{t_0})>0$ such that
	\begin{equation}\label{eq:lem:necessity2}
        \inf_{ z\in Q^+, Q\subset\Omega^+_{t_0}}
        k_{\Omega,A}(R_{*},Q) 
		\leq 
        D
        \log\biggl(\frac{1}{\dist_d(z,(\Omega^+_{t_0})^c)} +1 \biggr),\qquad z\in\Omega_{t_0}^+.
	\end{equation}
\end{lemma}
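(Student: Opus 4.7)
The plan is to feed the counting function provided by Lemma \ref{lem:universal-pbmo} into the hypothesised exponential integrability and then use a Chebyshev-type argument, combined with the forward-backward oscillation bound \eqref{eq:univ-bmo-statement}, to extract the desired logarithmic growth of chain lengths.

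First I would apply Lemma \ref{lem:universal-pbmo} to $\Omega$ with $t_0$ there replaced by $t_0-\varepsilon$. This is legitimate by the $(t_0-\varepsilon)$-FIT connectivity assumption, and yields the counting function
\[
k(z) := \inf_{0 < \rho < 10^{-2}\delta \dist_d(z, (\Omega_{t_0-\varepsilon}^+)^c)} k_{\Omega,A}(R_*, B_d(z,\rho)), \qquad z \in \Omega_{t_0-\varepsilon}^+,
\]
together with the bound $\|k\|_{\pbmo^+(6,1,20\delta^{-1},\Omega_{t_0-\varepsilon}^+)} \leq 2c$. Since $k$ is locally bounded, Corollary \ref{cor:equivalences} upgrades this to $\|k\|_{\pbmo^+(\Omega_{t_0-\varepsilon}^+)} \lesssim 1$. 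Plugging $k$ into \eqref{eq:lem:necessity} then produces some $a \in \R$ with
\[
\int_{\Omega_{t_0}^+} \exp\bigl(\eta_0 (k(z)-a)_+\bigr)\,dz \leq D_0, \qquad \eta_0 := \eta / \|k\|_{\pbmo^+(\Omega_{t_0-\varepsilon}^+)},
\]
and Chebyshev's inequality yields $|\{z \in \Omega_{t_0}^+ : k(z) > \lambda\}| \leq D_0 e^{-\eta_0(\lambda - a)}$ for every $\lambda > a$.

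Next I would fix $z \in \Omega_{t_0}^+$, set $M := k(z)$, and pick a parabolic cylinder $R$ centred slightly before $z$ in time, with $z \in R^+$ and $r(R)$ equal to a small enough absolute multiple of $\dist_d(z,(\Omega_{t_0}^+)^c)$, arranged so that $R^- \subset \Omega_{t_0}^+$ and $10A_3\delta^{-1}R \subset \Omega_{t_0-\varepsilon}^+$. The forward-backward oscillation bound \eqref{eq:univ-bmo-statement} applied with $z^+ = z$ gives $k(w) \geq M - c_1$ for every $w \in R^-$, so
\[
\dist_d(z,(\Omega_{t_0}^+)^c)^{n+p} \lesssim |R^-| \leq |\{k \geq M - c_1\} \cap \Omega_{t_0}^+| \leq D_0 e^{-\eta_0 (M - c_1 - a)}.
\]
Rearranging, this produces $M \lesssim \log(1/\dist_d(z,(\Omega_{t_0}^+)^c) + 1)$ with the dependence on the parameters as stated in the lemma. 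Given an almost-minimising $B_d(z,\rho)$ in the definition of $k(z)$, one additional FIT link converts it into a cylinder $Q \subset \Omega_{t_0}^+$ with $z \in Q^+$ and $k_{\Omega,A}(R_*, Q) \leq k(z) + O(1)$, which turns the bound on $M$ into \eqref{eq:lem:necessity2}. The $t_0$-FIT connectivity of $\Omega$ with central cylinder $R_*$ is inherited directly from the $(t_0-\varepsilon)$-FIT connectivity hypothesis.

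The principal technical obstacle will be the geometric placement of $R$: one needs $R^- \subset \Omega_{t_0}^+$ (so the measure lower bound on the level set sits in the integration domain) and simultaneously $10A_3\delta^{-1}R \subset \Omega_{t_0-\varepsilon}^+$ (so that \eqref{eq:univ-bmo-statement} applies), while retaining $r(R) \sim \dist_d(z,(\Omega_{t_0}^+)^c)$. The temporal gap $\varepsilon$ between $t_0$ and $t_0 - \varepsilon$ is precisely what supplies the extra room in the time direction for the $(10A_3\delta^{-1})$-dilate, and the inequality $\dist_d(z,(\Omega_{t_0-\varepsilon}^+)^c) \geq \dist_d(z,(\Omega_{t_0}^+)^c)$ controls the lateral extent. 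Small enough absolute proportionality constants then yield a legitimate $R$, whether $\dist_d(z,(\Omega_{t_0}^+)^c)$ is realised on the slice $\{t = t_0\}$ or on $\partial\Omega$.
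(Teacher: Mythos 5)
Your proposal follows the paper's own proof essentially step for step: apply Lemma~\ref{lem:universal-pbmo} at time $t_0-\varepsilon$ to obtain the counting function $k$, upgrade to $\pbmo^+(\Omega_{t_0-\varepsilon}^+)$ via Corollary~\ref{cor:equivalences}, feed $k$ into the exponential integrability hypothesis to obtain a Chebyshev level-set bound, and lower-bound that level set by placing a small cylinder $R$ with $z\in R^+$, $R^-\subset\Omega_{t_0}^+$, and $10A_3\delta^{-1}R$ admissible, using~\eqref{eq:univ-bmo-statement}. The one genuinely imprecise step is the claim that ``one additional FIT link'' converts the almost-minimising $B_d(z,\rho)$ into a cylinder $Q$ with $z\in Q^+$: a FIT link moves strictly \emph{forward} in time (this is the time lag built into Definition~\ref{defn:FITchain}), whereas any $Q$ with $z\in Q^+$ is centred roughly $3r(Q)^p$ \emph{before} $z$, so it cannot be reached by appending a link after $B_d(z,\rho)$; the correct operation is to truncate or re-place the tail of an almost-minimal chain at bounded additive cost (the paper also glosses over this with the word ``dominates''). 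Two smaller remarks: the temporal gap $\varepsilon$ is not what provides room for the $10A_3\delta^{-1}$-dilate --- taking $r(R)$ a sufficiently small multiple of $\dist_d(z,(\Omega_{t_0}^+)^c)$ already yields $10A_3\delta^{-1}R\subset\Omega_{t_0}^+\subset\Omega_{t_0-\varepsilon}^+$, which is exactly the paper's choice; rather, $\varepsilon$ is what makes $k$ a genuine $\pbmo^+$ function on the set $\Omega_{t_0-\varepsilon}^+$ appearing in hypothesis~\eqref{eq:lem:necessity}. Finally, you should record, as the paper does, that $|\Omega_{t_0}^+|<\infty$ (plug any fixed $\pbmo^+$ function into~\eqref{eq:lem:necessity}) to dispose of the case where $M-c_1-a$ is small, since there the Chebyshev bound alone is not decisive.
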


\begin{proof}
	Since $\Omega$ is a $(t_0-\varepsilon)$-FIT connected with central cylinder $R_* \subset \Omega_{t_0-\varepsilon}^- $, Lemma \ref{lem:universal-pbmo} implies that
	the function 
	\[
	k(z) := \inf_{0 < \rho < 10^{-2}\delta \dist_{d} (z, (\Omega_{t_0-\varepsilon}^{+})^c)  } k_{\Omega,A}(R_*,B_d(z,\rho))
	\]	
	satisfies 
	\[
	k\in {\pbmo}^{+}(6,1,20\delta^{-1},\Omega_{t_0-\varepsilon}^{+}).
	\]
	By Corollary \ref{cor:equivalences} we further deduce that 
$$
k \in {\pbmo}^{+}(3,1,1,\Omega_{t_0-\varepsilon}^{+}) := \pbmo^{+}(\Omega_{t_0-\varepsilon}^{+}).
$$ 
    By the hypothesis \eqref{eq:lem:necessity} there exists an absolute constant $a\in\mathbb{R}$
 such that for all $\lambda>0$ there holds
	\begin{multline*}
	 \exp(\eta \lambda / D_1 ) 
     \abs{ \{ z \in \Omega_{t_0}^{+}: (	k(z) - a )_+ > \lambda \} }	\\
		 \leq \int_{\Omega_{t_0}^{+}} \exp \Bigl( \eta (k(z) - a)_{+} / \no{k}_{\pbmo^{+}(\Omega_{t_0-\varepsilon}^+)} \Bigr) \, dz  
         \leq 
		 D_0 .
	\end{multline*}
    Fix $z_0\in \Omega^+_{t_0}$.
	Let 
    $R$
    be a parabolic cylinder such that $c(R^+)=z_0$ and $10A_3\delta^{-1} R \subset \Omega_{t_0}^{+}\subset \Omega_{t_0-\varepsilon}^{+}$, that is, satisfying the hypothesis of Lemma \ref{lem:universal-pbmo}, with
    \begin{align*}
	  \dist_d(z_0, (\Omega_{t_0}^{+})^c) = D_2(\delta,A_3) |R|^{1/(n+p)}.
	\end{align*}
	By \eqref{eq:univ-bmo-statement}
	there exists $c= c(n,p,A)$ so that 
	\begin{align*}
		|R| &\leq |\{z \in R^{-} : k(z) > \sup_{z^+ \in R^{+}} k(z^+) - c  \}| \\
		&\leq  |\{z \in \Omega_{t_0}^{+} : (k(z) - a)_+ > \sup_{z^+ \in R^{+}} k(z^+) - c - a  \}| \\
		&\leq D_0 \exp\Bigl(-\eta \bigl( \sup_{z^+ \in R^{+}} k(z^+) -c - a\bigr)/D_1 \Bigr),
	\end{align*}
    whenever $\lambda_0:=\bigl( \sup_{z^+ \in R^{+}} k(z^+) -c - a\bigr) > 1$. If $\lambda_0 \leq 1$, then by a simple estimate
	\begin{multline*}
		|R| \leq |\Omega_{t_0}^{+}|\leq e^{\eta/D_1} e^{-\eta/D_1} \int_{\Omega_{t_0}^{+}} \exp \Bigl( \eta (k(z) - a)_{+} / \no{k}_{\pbmo^{+}(\Omega_{t_0-\varepsilon}^+)} \Bigr) \, dz   \\
        \leq
        e^{\eta/D_1} D_0 \exp\Bigl(-\eta \bigl( \sup_{z^+ \in R^{+}} k(z^+) -c - a\bigr) /D_1 \Bigr).
	\end{multline*}
	Thus, using also that $|\Omega_{t_0}^+|<\infty$ (hence cannot contain points far away from the boundary),
    we have
	\begin{multline*}
			k(z_0)\leq
            \sup_{z^+ \in R^{+}} k(z^+) \leq  \frac{D_1}{\eta}\log \biggl( \frac{	D_0  D_2^{n+p}}{\dist_d(z_0,(\Omega_{t_0}^{+})^c)^{n+p}}\biggr) + \frac{\eta}{D_1} +c + a \\
			\leq 
            D(A,n,p,D_0,\eta,\Omega^+_{t_0})
        \log\biggl(\frac{1}{\dist(z_0,(\Omega_{t_0}^{+})^c)} +1 \biggr).
	\end{multline*}
Since this holds for all $z_0\in \Omega^+_{t_0}$ and
the left-hand side dominates the left-hand side of \eqref{eq:lem:necessity2},
the claim follows.
We are done.
\end{proof}

\section{Examples of domains}
\label{sec:examples}
\subsection*{Review of basics in the stationary setting}
We conclude our study by informally discussing how $t_0$-FIT connectivity and $(t_0,A)$-H\"older conditions compare to other classes of domains. 

We begin by discussing John domains which form a subset of the domains satisfying the quasihyperbolic boundary condition (this inclusion is a rather immediate consequence of definitions), also known as the H\"older condition.
A domain $\Omega \subset \R^{n}$ is a John domain \cite{MR565886}
if there exists a central point $z_* \in \Omega$ and a constant $C_{\Omega}>0$ such that for all $z \in \Omega$ there exists an arc length parametrized rectifiable curve $\gamma$ such that  $\gamma(0) = z$, $\gamma(\ell(\gamma)) = z_*$ and for all $s \in (0,\ell(\gamma))$ there holds that 
\[
\int_{0}^{s} \, d \ell \ge C_{\Omega} \dist(\gamma(s),\Omega^{c}). 
\]
Under mild connectivity assumptions John domains are known \cite{BuKoGu96} to be equivalent with Boman chain sets \cite{Boman82}. 
John domains are also known as those with the twisted cone condition, prototypical examples are domains with Lipschitz regular boundaries or the von Koch snowflake.

In the plane, a simply connected domain whose Riemann mapping has a global quasiconformal extension
is known to be uniform and hence John \cite{MR565886}.
On the other hand, every global quasiconformal mapping is locally H\"older continuous (see for instance Corollary 3.10.3 in \cite{MR2472875}).
Now, the class of simply connected domains whose Riemann mapping extends H\"older continuously but not necessarily quasiconformally is the class of H\"older domains \cite{BecPom82}.
This is one way to see the difference between the John condition and the quasihyperbolic boundary condition.

Despite the name, 
bounded domains with $\alpha$-H\"older boundaries, for $\alpha \in (0,1)$, 
but with an exterior cusp fail the quasihyperbolic boundary condition or equivalently the H-chain condition \cite{Buck99} (which inspires the $(t_0,A)$-FIT H\"older condition) and hence they also fail the John condition (as John condition implies quasihyperbolic boundary condition). 
In \cite{MR889369}, an explicit example of a H\"older domain that is not a John domain is constructed.

Finally, the boundary behavior of functions is tightly connected to their extendability from the domain to the full ambient space. In the case of BMO, the class of bounded domains in which a bounded extension operator exists was characterized by Jones \cite{Jon80Ext} and later shown to coincide with bounded uniform domains \cite{GehOsg79}, a proper subclass of John domains.
Hence, the class of H\"older domains is considerably larger than the class of bounded extension domains.

\subsection*{Example 1: Cylinders and graphical domains are $(t_0,A)$-H\"older}
In case $\Omega \subset \R^{n}$ is a H\"older domain,
then by \cite{Saa16} the domain $\Omega \times (T_1,T_2)$ with $T_1 < T_2$ is a $(t_0,A)$-FIT H\"older domain for all $t_0 \in (T_1,T_2)$.
Similarly, in case $\psi : \R^{n+1} \to (0,\infty)$ is Lipschitz continuous (in $d$-metric) and 
\[
\Omega = \{(x_1,x',t) \in B_d(0,1) : -1 < x_1 < \psi(x',t) \},
\] 
then $\Omega\subset\mathbb{R}^{n+2}$ is a $(t_0,A)$-FIT H\"older domain for $t_0 \in (0,1)$.

\subsection*{Example 2: Failure of $t_0$-connectivity}
A very nice domain in terms of $d$-metric can fail the connectivity.
For instance, if $\Omega \subset \R^{2}$ is the interior of the sum-set
\[
[-1/2,1/2]^{2} + \{(-1,0),(-1,1),(0,1),(1,1), (1,0) \},
\]
we have an example of a domain failing the $t_0$-FIT connectivity for all $t_0 \le 1/2$.
 
\subsection*{Example 3:  Failure of $(t_0,A)$-H\"older with $(t_0,A)$-connectivity holding}
A typical way to fail the accessibility with a $(t_0,A)$-H\"older condition without violating the $t_0$-connectivity condition is a purely lateral part of boundary.
For instance, if $\Omega \subset \R^{2}$ is the interior of the sum-set
\[
[-1/2,1/2]^{2} + \{(-1,0),(-1,1),(0,1) \},
\]
we have an example of a domain satisfying the $t_0$-FIT connectivity for all $-1/2 < t_0 < 3/2$
but failing the bound \eqref{eq:defn:FITHolder} (for all possible constants) when $t_0 \le 1/2$.
This is similar to the fact that parabolic chord arc domains in the plane are actually graph domains (see \cite{Eng17}).
Further,
we point out that in this domain the parabolic forward-in-time BMO function $k$ from Lemma \ref{lem:universal-pbmo}
has a power type (as opposed to logarithmic) blow-up towards the boundary point $(-1,0) \times \{1/2\}$,
as is to be expected from Theorem \ref{theorem:intro1}.

\subsection{Example 4: Failure of  spatial boundedness with $(t_0,A)$-H\"older holding}\label{sect:examples:ex4}
Choosing a constant $M > 1$ depending on the parameter $p \in (1,\infty)$ fixing the metric
large enough,
with
\[
\Omega = \{(x,t) \in \R^{2}: t < e^{-M|x|}  \} ,
\]
we have an example of a domain that is $t_0$-FIT connected for all $t_0 < 1$
and which is $(t_0,A)$-FIT H\"older for $t_0 \in [0,1)$. 
This domain is not $(t_0,A)$-FIT H\"older for $t_0  < 0$ (that would contradict the finite volume of $(t_0,A)$-FIT H\"older sets),
and
setting $t_0 = 0$ we see that $(t_0,A)$-FIT H\"older allows for unboundedness.
Indeed, for $(x,t) \in \Omega_{0}^{+}$ there holds that
\[
\log_{2}\biggl(1+ \frac{1}{\dist_d((x,t), \Omega^{c})} \biggr) \gtrsim 1 + |x|^{M}.
\]
The relevant FIT chains in this $\Omega$ are essentially as in the free space,
so \eqref{eq:defn:FITHolder} is easily satisfied.

\bibliography{references}

\end{document}